\theoremstyle{plain}
\newtheorem{theorem}[subsection]{Theorem}
\newtheorem{proposition}[subsection]{Proposition}
\newtheorem{corollary}[subsection]{Corollary}	
\newtheorem{lemma}[subsection]{Lemma}
\theoremstyle{definition}
\newtheorem{definition}[subsection]{Definition}
\theoremstyle{remark}
\newtheorem{remark}[subsection]{Remark}
\newtheorem{example}[subsection]{Example}
\newtheorem{examples}[subsection]{Examples}
\numberwithin{equation}{section}
\newcommand{\noproof}{\hfil\qed}
\newcommand{\0}{\emptyset}
\newcommand{\bC}{\mathsf{C}}
\newcommand{\bF}{\mathbb{F}}         
\newcommand{\bV}{\mathsf{V}}
\newcommand{\bZ}{\mathbb{Z}}
\newcommand{\bU}{\mathsf{U}}
\DeclareMathOperator{\Aut}{Aut}
\DeclareMathOperator{\op}{op}
\DeclareMathOperator{\id}{id}
\newcommand{\Hp}{\ensuremath{\mathsf{Hoops}}}
\newcommand{\WH}{\ensuremath{\mathsf{WHoops}}}
\newcommand{\PH}{\ensuremath{\mathsf{PHoops}}}
\newcommand{\MV}{\ensuremath{\mathsf{MVAlg}}}
\newcommand{\PA}{\ensuremath{\mathsf{PAlg}}}
\newcommand{\Gp}{\ensuremath{\mathsf{Grp}}}
\newcommand{\Set}{\ensuremath{\mathsf{Set}}}
\newcommand{\Cat}{\ensuremath{\mathsf{Cat}}}
\newcommand{\Rng}{\ensuremath{\mathsf{Rng}}}
\newcommand{\Pt}{\ensuremath{\mathsf{Pt}}}
\newcommand{\Ring}{\ensuremath{\mathsf{Ring}}}
\newcommand{\CRing}{\ensuremath{\mathsf{CRing}}}
\newcommand{\Assoc}{\ensuremath{\mathsf{Assoc}}}
\newcommand{\Lie}{\ensuremath{\mathsf{Lie}}}
\newcommand{\Leib}{\ensuremath{\mathsf{Leib}}}
\newcommand{\CAssoc}{\ensuremath{\mathsf{CAssoc}}}
\newcommand{\AbAlg}{\ensuremath{\mathsf{AbAlg}}}
\newcommand{\Alg}{\ensuremath{\mathsf{Alg}}}
\newcommand{\Alt}{\ensuremath{\mathsf{Alt}}}
\newcommand{\Vect}{\ensuremath{\mathsf{Vec}}}
\date{}
\newcommand{\w}{\rightarrow}
\begin{document}

\title[Coherent and ideal actions in ideally exact categories]{Coherent and ideal actions\\in ideally exact categories}

\author[M.~Mancini]{Manuel Mancini~\orcidlink{0000-0003-2142-6193}}
\author[G.~Metere]{Giuseppe Metere~\orcidlink{0000-0003-1839-3626}}
\author[F.~Piazza]{Federica Piazza~\orcidlink{0009-0001-1028-9659}}

\email{manuel.mancini@unipa.it; manuel.mancini@uclouvain.be}
\email{giuseppe.metere@unimi.it}
\email{federica.piazza07@unipa.it; federica.piazza1@studenti.unime.it}

\address[M.~Mancini, F.~Piazza]{Dipartimento di Matematica e Informatica, Università degli Studi di Palermo, Via Archirafi 34, 90123 Palermo, Italy.}

\address[M.~Mancini]{Institut de Recherche en Mathématique et Physique, Université catholique de Louvain, chemin du cyclotron 2 bte L7.01.02, B--1348 Louvain-la-Neuve, Belgium.}

\address[G.~Metere]{Dipartimento di Scienze per gli Alimenti, la Nutrizione e l'Ambiente, Università degli Studi di Milano Statale, Via Celoria 2, 20133 Milano, Italy.}

\address[F.~Piazza]{Dipartimento di Scienze Matematiche e Informatiche, Scienze Fisiche e Scienze della Terra, Università degli Studi di Messina, Viale Ferdinando Stagno d'Alcontres 31, 98166 Messina, Italy.}


\begin{abstract}
In the context of ideally exact categories, we introduce the notions of internal \emph{coherent} action and internal \emph{ideal} action that generalise different aspects of unital actions of rings and algebras. We prove that every ideal action is coherent, and that the converse statement holds in some relevant ideally exact contexts. Furthermore, a connection with G.~Janelidze’s notion of semidirect product in ideally exact categories is analysed.
\end{abstract}

\subjclass[2020]{03B52; 06D35; 08C05; 16B50; 18C05; 18E13}
\keywords{Ideally exact category, semi-abelian category, internal action, non-associative algebra, MV-algebra, product algebra}

\maketitle

\section{Introduction}\label{sec_intro}
The concept of action---or of external operation---is a pervasive algebraic notion, as it enables algebraic structures to interact beyond their intrinsic environment of definition. 

In categorical terms, such actions can be defined as functors. 
For instance, an action of a group $G$ on a set $X$ can be described as a functor
\[
\alpha\colon G(*) \to \Set,
\]
where $G(*)$ is the group $G$ viewed as a one-object groupoid, and the unique object~$*$ is sent to $X$ by $\alpha$.

However, it is sometimes necessary to consider \emph{internal actions}, where the acting and the acted-upon objects belong to the same category. These kinds of actions are useful tools for studying the algebraic structures themselves, as it happens systematically in cohomological algebra. In the case of groups, a functor $\alpha\colon G(*) \to \Gp$ corresponds to a group homomorphism $G \to \Aut(X)$, where $X=\alpha(*)$ and $\Aut(X)$ denotes the group of automorphisms of $X$. 

On the one hand, this approach offers certain advantages, as it enables the expression of the notion of action within the internal language of the category of groups. On the other hand, it also presents several drawbacks. Firstly, many algebraic categories, otherwise well-behaved, do not admit an internal construction that plays the role of $\Aut(-)$. Secondly, even in cases where such a construction exists, it is typically not functorial.

The right take on this subject was developed by D.~Bourn and G.~Janelidze in~\cite{Bourn-Janelidze:Semidirect}, and later systematised by F.~Borceux, G.~Janelidze and G.~M.~Kelly in~\cite{IntAct}. In these two papers, the authors make a strong point on what an internal action should be from a \emph{category-theoretic perspective}. The first paper centres on the algebraic (monadic) description of split epimorphisms, i.e., of the category of \emph{points} (see \Cref{sec_prel}). In a Barr-exact pointed protomodular category $\bC$ with pushouts of split monomorphisms (and thus in a semi-abelian category, since finite coproducts exist automatically in $\bC$), the split epimorphisms with a fixed codomain $B$ are given by algebras on the kernel of the epimorphism, for a specified monad $B\flat(-) \colon \bC \to \bC$. When the category is not pointed, the general theory introduced in~\cite{Bourn-Janelidze:Semidirect} still applies, albeit with a caveat: it only allows us to express split epimorphisms over $B$ in terms of split epimorphisms over another object $E$, whenever there is a morphism $E \to B$.

However, the ``working mathematician'' knows that proper notions of (external) actions have been defined in many non-pointed algebraic contexts, as for example for unital algebras over a field $\bF$ or for unital rings~\cite{maclane_homology}, and they are often obtained by imposing additional axioms to a corresponding notion of action in a pointed context (see, for instance,~\cite{Orzech1, Orzech2}, where external actions are described in the context of groups with operations). 

The present paper aims to partially bridge this gap, by proposing the new notions of \emph{coherent action} and of \emph{ideal action}. From an algebraic perspective, the former generalises unital actions of rings and algebras, where a multiplicative unit is required to \emph{act} like a unit, while the latter stems from the classical situation where a unital algebra acts on an ideal.

\smallskip
The manuscript is structured as follows. After this introduction, Section 2 presents some necessary background. \emph{Ideally exact categories} are recalled in Section 3, while in Section 4 we propose the notions of coherent and ideal actions, together with their morphisms. In Theorem~\ref{thmunital}, we show that all ideal actions are coherent; the problem of whether the converse of the theorem holds in general remains open. We call \emph{BAT} the ideally exact contexts with a \emph{good theory of actions}, i.e., where the converse of Theorem~\ref{thmunital} holds for objects and morphisms. A connection with semidirect products in the ideally exact context is analysed in Section 5, and equivalent conditions for an ideally exact context to be BAT are given (Corollary~\ref{cor:main}). 
The final section presents case studies of BAT contexts, namely: unital non-associative $\bF$-algebras or rings, MV-algebras, product algebras, and $\Set^{\op}$, the dual of the category of sets.

\medskip
\textbf{Notation.} We often identify algebraic varieties with their corresponding categories; therefore, we shall adopt the same notation for both.

\section{Preliminaries}\label{sec_prel}
Let us briefly recall that, for an object $B$ of a category $\bC$, one can define the slice category~$(\bC\downarrow B)$, having for objects the arrows with codomain $B$, and for morphisms the obvious commutative triangles. 
Let $\bC$ be a category with pullbacks, with initial object~$\0$\footnote{Notice that the symbol $\0$ will not be used to denote the empty set, except, of course, for the categories where the empty set is initial.} and terminal object $1$. From now on, we denote by $\iota_B$ the unique map $\0 \to B$, and by~$\tau_B$ the unique map $B \to 1$. When the category is \emph{pointed}, i.e., when the unique map $\iota_1=\tau_\0 \colon \0 \to 1$ is an isomorphism, we denote the initial/terminal object with $0$. 

We recall that the \emph{adjunction associated} with a morphism $f \colon E \to B$ of $\bC$ is
\[
\begin{tikzcd}
{(\bC\downarrow B)} & {(\bC \downarrow E),}
\arrow[""{name=0, anchor=center, inner sep=0}, "{f^{*}}"', from=1-1, to=1-2]
\arrow[""{name=1, anchor=center, inner sep=0}, "{f \circ -}"', curve={height=16pt}, from=1-2, to=1-1]
\arrow["\dashv"{anchor=center, rotate=-89}, draw=none, from=1, to=0]
\end{tikzcd}
\]
where $f^{*}$ is the pullback functor along $f$, determined by a fixed choice of pullbacks along~$f$. More precisely, for any object $p\colon A\to B$ of $(\bC\downarrow B)$, the object $f^*(p)$ of $(\bC\downarrow E)$ is the projection
\[
p_1 \colon E \times_B A \to E
\]
in the pullback diagram
\[
\begin{tikzcd}
{E\times_{B} A} & A \\
E & B.
\arrow["{p_2}", from=1-1, to=1-2]
\arrow["{p_1}"', from=1-1, to=2-1]
\arrow["p", from=1-2, to=2-2]
\arrow["f"', from=2-1, to=2-2]
\end{tikzcd}
\]
Notice that the unit and the counit of the adjunction $(f \circ -)\dashv f^{*}$ are \emph{cartesian}.

Another relevant construction on $\bC$ is the category $\Pt_{\bC}(B)$ of \emph{points over $B$}, i.e., the category of \emph{pointed objects} of $(\bC\downarrow B)$. Explicitly, objects are split epimorphisms over~$B$ with a chosen splitting, and morphisms are arrows between the domains of such split epimorphisms, commuting with both the retractions and the sections.

Let $f\colon E \to B$ be a morphism of $\bC$. If $\bC$ admits pullbacks along $f$, we can define the functor
\[
f^*\colon \Pt_{\bC}(B) \to \Pt_{\bC}(E)
\](notice that we are adopting the same notation here, as for slice categories, since the construction is essentially the same). Furthermore, if $\bC$ admits pushouts along~$f$, the functor $f^*$ has a right adjoint 
\[
f_!\colon \Pt_{\bC}(E) \to \Pt_{\bC}(B),
\]
defined by
\[
\begin{aligned}
\xymatrix{
D\ar@<+.5ex>[d]^{d}
\\
E\ar@<+.5ex>[u]^{e}
}    
\end{aligned}
\quad\longmapsto\quad
\begin{aligned}
\xymatrix{
D\ar@<+.5ex>@{.>}[d]^{d}\ar@{.>}[r]
&D+_EB\ar@<+.5ex>[d]^{[f\circ d, \ \id_B]}
\\
E\ar@<+.5ex>@{.>}[u]^{e}\ar@{.>}[r]_-{f}
&B,\ar@<+.5ex>[u]^{\iota_2}
}    
\end{aligned}
\]
where $f_!(e,d)=(\iota_2,[f\circ d, \id_B])$ is given by the universal property of the pushout of $e$ along $f$.

From now on, let us tacitly suppose that $\bC$ has pullbacks and pushouts, so that~$f^*$ and~$f_!$ are defined for all morphisms $f \colon E \to B$ of $\bC$. Let us fix some standard terminology. 

The category $\bC$ is \emph{protomodular}~\cite{protomodular} when, for every morphism $f \colon B \to E$ in it, the functor $f^*$ on points reflects isomorphisms. Notice that if $\bC$ is \emph{pointed}, protomodularity is equivalent to the \emph{split short five-lemma}~\cite{reg_prot}. $\bC$ is \emph{regular}~\cite{Barr} if it is a finitely complete category and all effective equivalence relations have pullback stable coequalizers. A regular category $\bC$ is \emph{Barr-exact}~\cite{Barr} if all equivalence relations are effective, i.e., kernel pairs. Finally, a category $\bC$ is \emph{semi-abelian}~\cite{Semi-Ab} when it is pointed, with finite coproducts, protomodular and Barr-exact. 

Let $\bC$ be a protomodular category. If the functors $f^*$ are not only conservative, but monadic, $\bC$ is said to be a category with \emph{semidirect products}~\cite{Bourn-Janelidze:Semidirect}. Indeed, if we denote by~$T^f$ the monad determined by $f^*$, the $T^f$-algebras are called \emph{internal actions}. One defines the semidirect product $(X,\xi) \rtimes (B,f)$ of $(B,f)$ with a $T^f$-algebra $(X,\xi)$ as a preimage of $(X,\xi)$ along the comparison equivalence $K$:
\[
\xymatrix@!C=14ex{
&(X,\xi)\ \ \in \ar@{|-->}[dl]
&\Pt_{\bC}(E)^{T^f}\ar[d]^{\text{forgetful}}
\\(X,\xi)\rtimes(B,f)\ \ \in 
&\ar@<+1ex>[ur]^K\Pt_{\bC}(B)\ar[r]_-{f^*}&\Pt_{\bC}(E). \ar@/_2.5ex/[l]^{\bot}_(.35){f_!}}
\]
When $\bC$ is pointed, one may take $E=0$, so that $\Pt_{\bC}(E)\cong \bC$. In this case, $f^*=\iota_B^*$ is nothing but the kernel functor, so that we reproduce a more familiar notion of semidirect product. Notice that in this case, we slightly modify our notation: $B\flat \coloneqq T^{\iota_B}$, where the object $B\flat X$ is given by a kernel
\[
\kappa_{B,X} \colon B\flat X \to B+X
\]
of the morphism $[\id_B,0]\colon B+X \to B$. Hence, internal actions are morphisms of the form
\[
\xi \colon B \flat X \to X
\]
and monadicity establishes that
\begin{equation}\label{equivalence}
K \colon \Pt_\bC(B) \to \bC^{B \flat}.
\end{equation}
is an equivalence of categories between points over $B$ and internal actions of $B$. Notice that $(-) \flat (-)$ is functorial on both components.

\section{Ideally exact categories}\label{ideallyexactcat}
Ideally exact categories have been introduced by G.~Janelidze in~\cite{IdeallyExact} as a non-pointed counterpart of semi-abelian categories. In fact, ideally exact categories are closely related to a categorical setting introduced by S.~Lapenta, L.~Spada and the second named author in~\cite{rel} in order to define a \emph{categorical notion of ideal} relative to a given adjunction $U\vdash F$ (a \emph{basic setting for relative $U$-ideal}, see~\cite[Definition~3.3]{rel}). In the present article, we stick to G.~Janelidze's definition, since, among many of its features, it clarifies how the notion of relative $U$-ideal can be defined intrinsically.

\begin{definition}\cite[Definition 3.2 and Theorem 3.1]{IdeallyExact}
A category $\bU$ is \emph{ideally exact} if it is Barr-exact, protomodular, has finite coproducts, and the unique morphism $\0 \to 1$ in $\bU$ is a regular epimorphism.
\end{definition}

In addition to all semi-abelian categories, examples of ideally exact categories include the categories $\Ring$ and $\CRing$ of unital and commutative unital rings respectively, every category of unital algebras over a field $\bF$ (see \Cref{Var}), the categories of \emph{MV-algebras}, \emph{product algebras} (see~\cite{rel} and \Cref{MV}), and any cotopos (see \Cref{sec_set}).

Since the pullback functor $\bU \to (\bU \downarrow \0)$ along the regular epimorphism $\0 \to 1$ is monadic, ideal exactness can be related to adjoint pairs.
 
\begin{theorem}\cite[Theorem 3.1]{IdeallyExact}\label{carid}
Let $\bU$ be a category with pullbacks. The following conditions are equivalent:
\begin{enumerate}
\item $\bU$ is ideally exact;
\item $\bU$ is Barr-exact, has finite coproducts and there exists a monadic functor $\bU \to \bV$, where $\bV$ is a semi-abelian category;
\item There exists a monadic functor $\bU \to \bV$, where $\bV$ is a semi-abelian category, such that the underlying functor of the corresponding monad preserves regular epimorphisms and kernel pairs. \noproof
\end{enumerate}
\end{theorem}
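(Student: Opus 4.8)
The plan is to establish the cycle of implications $(1)\Rightarrow(3)\Rightarrow(2)\Rightarrow(1)$, using throughout the slice $(\bU\downarrow 0)$ over the initial object as the canonical semi-abelian ``base''. For $(1)\Rightarrow(3)$ I would take $\bV=(\bU\downarrow 0)$ and $U=\iota^{*}\colon\bU\cong(\bU\downarrow 1)\to(\bU\downarrow 0)$, the pullback functor along $\iota\colon 0\to 1$. First I would verify that $\bV$ is semi-abelian: the identity $\id_{0}$ is simultaneously initial and terminal in the slice, so $\bV$ is pointed; Barr-exactness and Bourn protomodularity are inherited by any slice of an ideally exact category; and finite coproducts in $(\bU\downarrow 0)$ are computed on domains, hence exist. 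Since $\iota$ is a regular epimorphism in the Barr-exact category $\bU$, it is an effective descent morphism, so $\iota^{*}$ is monadic. It then remains to show that the induced monad $T=\iota^{*}\circ L$, with $L\colon(\bU\downarrow 0)\to\bU$ the domain functor (left adjoint to $\iota^{*}$), preserves regular epimorphisms and kernel pairs; this splits into two routine verifications, since $L$ reads off domains and regular epimorphisms and kernel pairs in a slice are computed on domains, while $\iota^{*}$ preserves kernel pairs as a right adjoint and preserves regular epimorphisms by their pullback-stability in the regular category $\bU$.

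For $(3)\Rightarrow(2)$ the monadic functor to a semi-abelian category is part of the hypothesis, so it only remains to deduce that $\bU\cong\bV^{T}$ is Barr-exact and has finite coproducts from the preservation properties of $T$. Since $U$ creates limits, $\bU$ is finitely complete and $U$ reflects limits; equivalence relations in $\bU$ are sent by $U$ to equivalence relations in $\bV$, which are effective as $\bV$ is exact. The hypothesis that $T$ preserves regular epimorphisms makes $U$ preserve and reflect regular epimorphisms and endows $\bU$ with pullback-stable factorizations, while $T$ preserving kernel pairs lets one lift effectiveness of equivalence relations from $\bV$ up to $\bU$; together these give Barr-exactness of $\bU$. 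For finite coproducts I would exploit that a semi-abelian $\bV$ is a Barr-exact Mal'cev category, so reflexive relations are effective equivalence relations and reflexive coequalizers are built out of regular epimorphisms and kernel pairs; consequently $T$ preserves such reflexive coequalizers, $\bV^{T}$ inherits them, and the standard presentation of a coproduct of algebras as a reflexive coequalizer of free algebras yields finite coproducts in $\bU$.

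For $(2)\Rightarrow(1)$, Barr-exactness and finite coproducts are assumed, so I must produce Bourn protomodularity and the regular epimorphism $\iota\colon 0\to 1$. Protomodularity transfers along the conservative, limit-creating monadic functor $U\colon\bU\to\bV$: given a morphism $f$ of $\bU$ and a morphism of points over its codomain that $f^{*}$ turns into an isomorphism, applying the induced conservative functor $\Pt_{\bU}(-)\to\Pt_{\bV}(U(-))$ and invoking protomodularity of $\bV$ shows it was already an isomorphism, so each $f^{*}$ reflects isomorphisms. Finally, to see that $\iota$ is a regular epimorphism, factor it in the regular category $\bU$ as $0\twoheadrightarrow I\rightarrowtail 1$; applying $U$ gives a monomorphism into $U(1)=0_{\bV}$, and in the pointed category $\bV$ the only subobject of the zero object is the zero object itself, so $U(I\rightarrowtail 1)$ is an isomorphism. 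Since $U$ is conservative, $I\rightarrowtail 1$ is an isomorphism, whence $\iota$ is a regular epimorphism.

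The main obstacle is the exactness-and-cocompleteness transfer in $(3)\Rightarrow(2)$: extracting Barr-exactness and, above all, the existence of finite coproducts of $\bV^{T}$ from the mere preservation of regular epimorphisms and kernel pairs by $T$. This is precisely where the Mal'cev character of the semi-abelian base $\bV$---forcing reflexive relations to be effective equivalence relations---must be leveraged to reduce reflexive coequalizers to data that $T$ is assumed to preserve; once this is in place, the remaining implications are comparatively formal consequences of the monadicity and conservativity of $U$.
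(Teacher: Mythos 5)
The paper contains no proof of this statement to compare yours against: Theorem~\ref{carid} is imported verbatim from G.~Janelidze's article \cite{IdeallyExact} and closed with a \emph{no-proof} tombstone, so your argument can only be judged on its own merits and against the standard arguments in the literature. On those terms, your cycle $(1)\Rightarrow(3)\Rightarrow(2)\Rightarrow(1)$ is sound, and it identifies the right key lemmas. For $(1)\Rightarrow(3)$: the slice $(\bU\downarrow 0)$ is indeed semi-abelian (pointed with zero object $\id_0$, and exactness, protomodularity and coproducts pass to slices), monadicity of $\iota^{*}$ is exactly effective descent along a regular epimorphism in a Barr-exact category, and the preservation properties of $T=\iota^{*}\circ L$ follow, as you say, because colimits and connected limits in a slice are computed on domains while regular epimorphisms are pullback-stable in a regular category. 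For $(2)\Rightarrow(1)$: protomodularity does descend along any conservative, pullback-preserving functor into a protomodular category, and your argument that $\iota$ is a regular epimorphism is correct, since a monomorphism into the zero object of a pointed category is automatically split (hence invertible) and $U$ is conservative.

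The one step where your sketch compresses a genuinely non-trivial verification is the claim, inside $(3)\Rightarrow(2)$, that $T$ preserves reflexive coequalizers. This is true, but only through the exact-Mal'cev mechanism you allude to, and it is worth spelling out: the coequalizer of a reflexive pair $f,g\colon X\rightrightarrows Y$ in $\bV$ is the coequalizer $q$ of the regular image $R$ of $(f,g)\colon X\to Y\times Y$, which by the Mal'cev property is an equivalence relation and by exactness is the kernel pair of $q$; applying $T$ preserves the regular epimorphism $X\twoheadrightarrow R$, the kernel pair $R$ of $q$, and the regular epimorphism $q$, and since a regular epimorphism is the coequalizer of its kernel pair one concludes that $T(q)$ coequalizes $(Tf,Tg)$. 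With this in hand, the forgetful functor creates reflexive coequalizers in $\bV^{T}$, Linton's presentation of binary coproducts of algebras as reflexive coequalizers of free algebras yields finite coproducts, and the lifting of algebra structures along coequalizers of equivalence relations (which is where preservation of kernel pairs enters again) gives Barr-exactness of $\bV^{T}$. So there is no gap in the logical architecture, only places where a referee would ask you to expand the standard-but-not-immediate verifications.
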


\begin{remark}\cite[Theorem 3.3]{IdeallyExact}\label{cartesian}
Let $\bU$ be an ideally exact category. It follows from Theorem~\ref{carid} that there exists a monadic adjunction 
\begin{equation}\label{monadic}
\begin{tikzcd}
{\bU} & {\bV}
\arrow[""{name=0, anchor=center, inner sep=0}, "U"', from=1-1, to=1-2]
\arrow[""{name=1, anchor=center, inner sep=0}, "F"', curve={height=14pt}, from=1-2, to=1-1]
\arrow["\dashv"{anchor=center, rotate=-90}, draw=none, from=1, to=0]
\end{tikzcd}
\end{equation}
with $\bV$ semi-abelian. This adjunction is associated with the unique morphism $\0 \to 1$ (up to an equivalence) if and only if the unit of the adjunction is cartesian. One may always choose $\bV=(\bU \downarrow \0)$ with $U$ and $F$ defined in the obvious way, but this may not be the most convenient choice. For instance, if $\bU$ is already semi-abelian, it might be most convenient to take $\bV=\bU$. We further observe that, since $F$ is a left adjoint, $F(0)$ is an initial object of $\bU$.
\end{remark}

\begin{example}\label{monadic_ring}
Let $\bU=\Ring$ be the category of unital rings, which has the ring of integers~$\bZ$ as initial object and the zero ring $\{ 0 \}$ as terminal one. Then a monadic adjunction as in Remark~\ref{cartesian} is given by
\begin{equation}\label{eq:situation_rng}
\begin{tikzcd}
{\Ring} & {\Rng,}
\arrow[""{name=0, anchor=center, inner sep=0}, "U"', from=1-1, to=1-2]
\arrow[""{name=1, anchor=center, inner sep=0}, "F"', curve={height=16pt}, from=1-2, to=1-1]
\arrow["\dashv"{anchor=center, rotate=-90}, draw=none, from=1, to=0]
\end{tikzcd}
\end{equation}
where $\Rng$ is the semi-abelian category of not necessarily unital rings, $U$ is the forgetful functor and $F$ maps every ring $X$ to the semidirect product $\bZ \ltimes X$ with multiplication
\[
(\alpha,x)\cdot(\alpha',x')=(\alpha \alpha', xx' + \alpha x' + \alpha' x)
\] 
and unit element $(1,0_X)$. The unit $\eta \colon 1_{\Rng} \Rightarrow UF$ of the adjunction is cartesian since the map $\eta_X \colon X \to U(\bZ \ltimes X) \colon x \mapsto (0,x)$ is a kernel of
\[
UF(\tau_X) \colon U(\bZ \ltimes X) \to U(\bZ) \colon (\alpha,x) \mapsto \alpha.
\]
Thus, $F \dashv U$ is, up to an equivalence, the adjunction associated with the unique morphism $\bZ \to \{ 0 \}$ in $\Ring$.
\end{example}
 
\section{Coherent and ideal actions}\label{unitalactions}
Let $\bU$ be an ideally exact category, let $\bV$ be a semi-abelian category and let 
\begin{equation}\label{eq:situation}
\begin{tikzcd}
{\bU} & {\bV}
\arrow[""{name=0, anchor=center, inner sep=0}, "U"', from=1-1, to=1-2]
\arrow[""{name=1, anchor=center, inner sep=0}, "F"', curve={height=14pt}, from=1-2, to=1-1]
\arrow["\dashv"{anchor=center, rotate=-90}, draw=none, from=1, to=0]
\end{tikzcd}
\end{equation}
be a monadic adjunction with cartesian unit. We shall refer to such a situation as an \emph{ideally exact context}. 
Since $\bV$ is semi-abelian, internal actions can clearly be defined for objects of $\bV$.  In this section we extend this possibility to objects of $\bU$ acting via the functor~$U$.

\begin{definition}
Let $B$ be an object of $\bU$ and let $X$ be an object of $\bV$. A \emph{relative $U$-action} of $B$ on $X$ is an internal action $\xi \colon U(B)\flat X \to X$ in $\bV$.
\end{definition}

\begin{definition}
Let $\xi \colon U(B)\flat X\to X$ be a relative $U$-action. We say that $\xi$ is a \emph{coherent action} if $\xi \circ (U(\iota_B )\flat \id_{X})=\xi_{0}$\footnote{The map $U(\iota_B) \flat \id_X \colon UF(0) \flat X \to U(B) \flat X$ is the unique morphism such that
\[
\kappa_{U(B),X} \circ (U(\iota_B) \flat \id_X) = (U(\iota_B) + \id_X) \circ \kappa_{UF(0),X},
\]
where $U(\iota_B) + \id_X \colon UF(0) + X \to U(B) + X$ is induced by $U(\iota_B)$ on $UF(0)$ and by $\id_X$ on $X$.}, where $\xi_{0}\colon UF(0)\flat X \to X$ is the relative $U$-action associated with the canonical split epimorphism
\[\begin{tikzcd}
{UF(X)} & {UF(0).}
\arrow["{UF(\tau_X)}", shift left=2, from=1-1, to=1-2]
\arrow["{UF(\iota_X)}", shift left, from=1-2, to=1-1]
\end{tikzcd}\]
In other words, the following diagram in $\bV$
\[\begin{tikzcd}
{U(B)\flat X} & X \\
{UF(0)\flat X}
\arrow["\xi", from=1-1, to=1-2]
\arrow["{U(\iota_B)\flat \id_{X}}", from=2-1, to=1-1]
\arrow["{\xi_{0}}"', from=2-1, to=1-2]
\end{tikzcd}\]
is commutative.
\end{definition}
    
\begin{example}
The action $\xi_0$ is coherent since $\xi_0=\xi_0 \circ (U(\id_{F(0)})\flat \id_{X})$.
\end{example}

\begin{remark}
Let us motivate our terminology choices.
In the case of Example~\ref{monadic_ring}, the action of a unital ring $B$ on a non-unital ring $X$ is coherent when the multiplicative unit acts \emph{coherently} as in $F(X)=\mathbb{Z} \ltimes X$ (see Theorem~\ref{BAT_Rng}). More generally, in the varietal case, when $U$ is a forgetful functor into a semi-abelian variety that forgets all the constants but one, coherent actions are those for which the constants behave coherently as in $F(X)$.
\end{remark}

The next lemma, whose proof is immediate, allows us to relate coherent actions in~$\bV$ with split epimorphisms in $\bU$.

\begin{lemma}\label{unit}
Let $\xi\colon U(B)\flat X \to X$ be a relative $U$-action, and let
\[
\begin{tikzcd}
{A} & {U(B)}
\arrow["{p}", shift left, from=1-1, to=1-2]
\arrow["{s}", shift left, from=1-2, to=1-1]
\end{tikzcd}
\]
be a split epimorphism associated with $\xi$ under the equivalence \eqref{equivalence}. Then, $\xi$ is coherent if and only if there exists a morphism $f\colon UF(X)\to A$ in $\bV$ such that the following diagram
\[
\begin{tikzcd}
X & {UF(X)} & {UF(0)} \\
X & A & {U(B)}
\arrow["{\eta_{X}}", from=1-1, to=1-2]
\arrow[equal, from=1-1, to=2-1]
\arrow["{UF(\tau_X)}", shift left, from=1-2, to=1-3]
\arrow["f"', dashed, from=1-2, to=2-2]
\arrow["{UF(\iota_X)}", shift left, from=1-3, to=1-2]
\arrow["{U(\iota_B)}", from=1-3, to=2-3]
\arrow["k"', from=2-1, to=2-2]
\arrow["p", shift left, from=2-2, to=2-3]
\arrow["s", shift left, from=2-3, to=2-2]
\end{tikzcd}
\]
is a morphism of split extensions in $\bV$ (see~\cite[Lemma 2.3]{cross}). Furthermore, the square on the right is a split pullback. \noproof
\end{lemma}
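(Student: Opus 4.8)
The plan is to read both equivalences off the Bourn--Janelidze correspondence between points and internal actions in the semi-abelian category $\bV$, applied to two split extensions that share the common kernel $X$. First I would fix the two relevant split extensions. The bottom row is, by definition, the split extension (point over $U(B)$) associated with $\xi$ under the equivalence $\Pt_{\bV}(U(B))\simeq (U(B)\text{-actions})$, so its kernel is exactly $k\colon X\to A$. For the top row, the hypothesis that the unit $\eta$ is cartesian (see \Cref{cartesian}) says precisely that $\eta_{X}\colon X\to UF(X)$ is the kernel of $UF(\tau)\colon UF(X)\to UF(0)$; hence the top row is a genuine split extension over $UF(0)$ with kernel $\eta_{X}$, and by construction it is the point corresponding to $\xi_{0}$.

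Now the diagram in the statement is exactly a candidate morphism of split extensions from the top row to the bottom one, whose kernel component is $\id_{X}$ and whose base component is $U(\iota)\colon UF(0)\to U(B)$. By \cite[Lemma 2.3]{cross}, such morphisms of split extensions correspond bijectively to pairs $(\ell,\phi)$ of a kernel map and a base map whose associated actions are equivariant, the compatibility being $\ell\circ\xi_{0}=\xi\circ(\phi\flat\ell)$, the middle component $f$ being then determined. Specialising to $\ell=\id_{X}$ and $\phi=U(\iota)$, the equivariance condition reads $\xi_{0}=\xi\circ(U(\iota)\flat\id_{X})$, which is literally the coherence of $\xi$. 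This yields both implications at once: the map $f$ exists if and only if $\xi$ is coherent.

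Finally, for the concluding remark I would observe that $f$ restricts to the isomorphism $\id_{X}$ on the kernels of the two split epimorphisms $UF(\tau)$ and $p$; in a pointed protomodular category a morphism of split epimorphisms inducing an isomorphism on kernels is automatically a pullback, and together with the commuting sections this exhibits the right-hand square as a split pullback. The only point requiring care---and the main obstacle to a fully formal write-up---is the bookkeeping of \cite[Lemma 2.3]{cross} across a change of base: the actions $\xi_{0}$ and $\xi$ live over the \emph{different} objects $UF(0)$ and $U(B)$ of $\bV$, so one must invoke the version of the point/action correspondence that is functorial in the base and records the base morphism $U(\iota)$, rather than the equivalence for a single fixed base. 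Once this is in place, the verification is immediate, as claimed.
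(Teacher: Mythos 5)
Your proposal is correct and takes exactly the route the paper intends: the paper marks the proof as immediate precisely because, once the top row is identified (via cartesianness of the unit, \Cref{cartesian}) as the split extension of $\xi_{0}$ and the bottom row as that of $\xi$, the equivalence is the specialization of the correspondence of \cite[Lemma 2.3]{cross} --- which is stated for split extensions over different bases, so your worry about change-of-base bookkeeping is already handled by that lemma --- to the pair $(\id_{X},U(\iota))$, yielding coherence $\xi_{0}=\xi\circ(U(\iota)\flat\id_{X})$ as the equivariance condition. Your justification of the split-pullback remark (isomorphism on kernels plus protomodularity) is likewise the standard argument and is correct.
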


We observe that, since the category $\bV$ is protomodular, by~\cite[Lemma 3.1.22]{malcev} the pair $(\eta_{X},UF(\iota_X))$ is jointly strongly epimorphic. Thus, if it exists, $f$ is uniquely determined by $\id_X$ and $U(\iota_B)$.

Recall from~\cite{rel} that a \emph{relative $U$-ideal} of an object $A'$ of $\bU$ is a morphism $k\colon X\to U(A')$ in $\bV$ such that there exists a morphism $p'\colon A'\to B$ in $\bU$ that makes the following diagram a pullback in $\bV$:
\[
\begin{tikzcd}
{X} & U(A') \\
0 & U(B).
\arrow["{k}", from=1-1, to=1-2]
\arrow["\tau_X"', from=1-1, to=2-1]
\arrow["U(p')", from=1-2, to=2-2]
\arrow["\iota_{U(B)}"', from=2-1, to=2-2]
\end{tikzcd}
\]
In other words, a $U$-ideal is a kernel in $\bV$ of a map that lives in $\bU$.

\begin{definition}\label{def_ideal}
Given an ideally exact context \eqref{eq:situation}, we say that a split epimorphism
\begin{equation}\label{ideal_split}
\begin{tikzcd}
{A} & {U(B)}
\arrow["{p}", shift left, from=1-1, to=1-2]
\arrow["{s}", shift left, from=1-2, to=1-1]
\end{tikzcd}    
\end{equation}
in $\bV$ is \emph{ideal} (relative to $U$), if there exists a split epimorphism
\begin{equation}\label{ideal_split_prime}\begin{tikzcd}
{A'} & {B}
\arrow["{p'}", shift left, from=1-1, to=1-2]
\arrow["{s'}", shift left, from=1-2, to=1-1]
\end{tikzcd}
\end{equation}
in $\bU$ and an isomorphism $\sigma\colon U(A')\to A$ such that the following diagram in $\bV$ 
\[\begin{tikzcd}
{U(A')} & {} & A \\
& {U(B)}
\arrow["\sigma", from=1-1, to=1-3]
\arrow["{U(p')}"', shift right, from=1-1, to=2-2]
\arrow["p"', from=1-3, to=2-2]
\arrow["{U(s')}"'{pos=0.3}, shift right, from=2-2, to=1-1]
\arrow["s"', shift right=2, from=2-2, to=1-3]
\end{tikzcd}\]
is commutative.
\end{definition}

\begin{remark}
A motivation for our choices of terminology: the split epimorphism~\eqref{ideal_split} is called \emph{ideal} because the object $U(B)$ acts on the domain of the relative $U$-ideal $k \colon X \to U(A')$, where $(X,k)$ is a kernel of $U(p')$.
\end{remark}

We recall that a functor $U\colon \bU \to \bV$ is \emph{full on isomorphisms} if, given an isomorphism $\alpha\colon U(A) \to U(B)$ in $\bV$, there exists an isomorphism $\beta\colon A \to B$ in $\bU$ such that $U(\beta)=\alpha$. 

\begin{lemma}\label{lm:unique_xi}
Given an ideally exact context \eqref{eq:situation}, suppose that the functor $U$ is faithful and full on isomorphisms. If the split epimorphism \eqref{ideal_split} of Definition~\ref{def_ideal} is ideal, then the split epimorphism \eqref{ideal_split_prime} is essentially unique.
\end{lemma}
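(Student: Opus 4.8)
The plan is to prove that given two splittings $(A',p',s')$ and $(A'',p'',s'')$ in $\bU$ realizing the same ideal split epimorphism (\ref{ideal_split}), there is an isomorphism between them in $\bU$ compatible with the structure maps. Concretely, suppose we have isomorphisms $\sigma'\colon U(A')\to A$ and $\sigma''\colon U(A'')\to A$ in $\bV$, each making the triangle of \Cref{def_ideal} commute. First I would form the composite $\alpha \coloneqq (\sigma'')^{-1}\circ\sigma'\colon U(A')\to U(A'')$, which is an isomorphism in $\bV$. Chasing the two commutative triangles, one checks that $\alpha$ is compatible with the (underlying) split epimorphisms, namely $U(p'')\circ\alpha=U(p')$ and $\alpha\circ U(s')=U(s'')$.

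Next I would invoke the hypothesis that $U$ is \emph{full on isomorphisms}: since $\alpha\colon U(A')\to U(A'')$ is an isomorphism in $\bV$, there exists an isomorphism $\beta\colon A'\to A''$ in $\bU$ with $U(\beta)=\alpha$. The remaining task is to promote the $\bV$-level compatibility of $\alpha$ with $p',p'',s',s''$ to a genuine $\bU$-level compatibility of $\beta$, i.e.\ to show $p''\circ\beta=p'$ and $\beta\circ s'=s''$ as morphisms of $\bU$. Here the \emph{faithfulness} of $U$ enters: applying $U$ to the equation $p''\circ\beta=p'$ yields $U(p'')\circ U(\beta)=U(p'')\circ\alpha=U(p')$, which holds by the compatibility already established; since $U$ is faithful and $p''\circ\beta$, $p'$ have the same image under $U$, they must be equal in $\bU$. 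The same argument applied to the sections gives $\beta\circ s'=s''$. Thus $\beta$ is an isomorphism of split epimorphisms in $\bU$, establishing essential uniqueness.

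The main obstacle I anticipate is the bookkeeping in the first step: one must be careful that the two triangles in \Cref{def_ideal} really do force $U(p'')\circ\alpha=U(p')$ and $\alpha\circ U(s')=U(s'')$, rather than some weaker statement. The triangle for $(A',p',s')$ reads $p\circ\sigma'=U(p')$ and $\sigma'\circ U(s')=s$, with the analogous equations for $(A'',p'',s'')$; combining $U(p')=p\circ\sigma'=p\circ\sigma''\circ\alpha=U(p'')\circ\alpha$ gives the first compatibility, and combining $U(s'')=(\sigma'')^{-1}\circ s=(\sigma'')^{-1}\circ\sigma'\circ U(s')=\alpha\circ U(s')$ gives the second. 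Once these are in hand, the descent along $U$ via fullness-on-isomorphisms and faithfulness is routine, and no protomodularity or exactness is needed beyond what guarantees the ambient $\bV$ and $\bU$ behave as categories with the stated adjunction.
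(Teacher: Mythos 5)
Your proposal is correct and follows essentially the same route as the paper's proof: form the composite isomorphism $(\sigma'')^{-1}\circ\sigma'$ in $\bV$, verify its compatibility with the underlying split epimorphisms, lift it to an isomorphism in $\bU$ via fullness on isomorphisms, and then use faithfulness of $U$ to conclude that the lift commutes with the retractions and sections, hence is an isomorphism in $\Pt_{\bU}(B)$. Your write-up is in fact slightly more explicit than the paper's, which compresses the last step into the single assertion that faithfulness makes the lifted map a morphism of points over $B$.
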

\begin{proof}
Suppose there exist two split epimorphisms
\[\begin{tikzcd}
{A'} & {B}
\arrow["{p'}", shift left, from=1-1, to=1-2]
\arrow["{s'}", shift left, from=1-2, to=1-1]
\end{tikzcd}\quad \text{ and } \quad
\begin{tikzcd}
{A''} & {B}
\arrow["{p''}", shift left, from=1-1, to=1-2]
\arrow["{s''}", shift left, from=1-2, to=1-1]
\end{tikzcd}\]
in $\bU$, and two isomorphisms $\sigma_1 \colon U(A') \to A$, $\sigma_2 \colon U(A'') \to A$ such that the following diagrams in $\bV$
\[\begin{tikzcd}
{U(A')} & {} & A \\
& {U(B)}
\arrow["\sigma_1", from=1-1, to=1-3]
\arrow["{U(p')}"', shift right, from=1-1, to=2-2]
\arrow["p"', from=1-3, to=2-2]
\arrow["{U(s')}"'{pos=0.3}, shift right, from=2-2, to=1-1]
\arrow["s"', shift right=2, from=2-2, to=1-3]
\end{tikzcd}
\qquad
\begin{tikzcd}
{U(A'')} & {} & A \\
& {U(B)}
\arrow["\sigma_2", from=1-1, to=1-3]
\arrow["{U(p'')}"', shift right, from=1-1, to=2-2]
\arrow["p"', from=1-3, to=2-2]
\arrow["{U(s'')}"'{pos=0.3}, shift right, from=2-2, to=1-1]
\arrow["s"', shift right=2, from=2-2, to=1-3]
\end{tikzcd}
\]
commute. Thus, if $\sigma=\sigma_2^{-1} \circ \sigma_1$, the diagram in $\bV$
\[
\begin{tikzcd}
{U(A')} & {} & {U(A'')}\\
& {U(B)}
\arrow["\sigma", from=1-1, to=1-3]
\arrow["{U(p')}"', shift right, from=1-1, to=2-2]
\arrow["U(p'')"', from=1-3, to=2-2]
\arrow["{U(s')}"'{pos=0.3}, shift right, from=2-2, to=1-1]
\arrow["U(s'')"', shift right=2, from=2-2, to=1-3]
\end{tikzcd}
\]
is commutative. Since $U$ is faithful and full on isomorphisms, there exists a morphism $\sigma' \colon A' \to A''$ in $\bU$, such that $U(\sigma')=\sigma$. By faithfulness of the functor $U$, $\sigma'$ is a morphism in the category $\Pt_\bU(B)$, i.e., the split epimorphisms
\[\begin{tikzcd}
{A'} & {B}
\arrow["{p'}", shift left, from=1-1, to=1-2]
\arrow["{s'}", shift left, from=1-2, to=1-1]
\end{tikzcd}\quad \text{ and } \quad
\begin{tikzcd}
{A''} & {B}
\arrow["{p''}", shift left, from=1-1, to=1-2]
\arrow["{s''}", shift left, from=1-2, to=1-1]
\end{tikzcd}\]
are isomorphic.
\end{proof}

\begin{definition}
Given an ideally exact context \eqref{eq:situation}, let $B$ be an object of $\bU$. A morphism 
\[
\begin{tikzcd}
{A_1} & {} & {A_2} \\
& {U(B)}
\arrow["h", from=1-1, to=1-3]
\arrow["{p_1}"', shift right, from=1-1, to=2-2]
\arrow["p_2"', from=1-3, to=2-2]
\arrow["{s_1}"'{pos=0.3}, shift right, from=2-2, to=1-1]
\arrow["s_2"', shift right=2, from=2-2, to=1-3]
\end{tikzcd}
\]
between ideal split epimorphisms over $U(B)$ is an \emph{ideal morphism} if there exists a morphism
\[
\begin{tikzcd}
{A_1'} & {} & {A_2'} \\
& {B}
\arrow["h'", from=1-1, to=1-3]
\arrow["{p_1'}"', shift right, from=1-1, to=2-2]
\arrow["p_2'"', from=1-3, to=2-2]
\arrow["{s_1'}"'{pos=0.3}, shift right, from=2-2, to=1-1]
\arrow["s_2'"', shift right=2, from=2-2, to=1-3]
\end{tikzcd}
\]
between the corresponding split epimorphisms over $B$ of Definition~\ref{def_ideal}, such that $U(h')=h$.
\end{definition}

\begin{definition}
Let $\xi \colon U(B)\flat X\to X$ be a relative $U$-action. We say that $\xi$ is an \emph{ideal action} if any corresponding split epimorphism
\[
\begin{tikzcd}
{A} & {U(B)}
\arrow["{p}", shift left, from=1-1, to=1-2]
\arrow["{s}", shift left, from=1-2, to=1-1]
\end{tikzcd}
\]
under the equivalence \eqref{equivalence} is ideal.
\end{definition}
Similarly, an ideal morphism of ideal actions comes from an ideal morphism of the corresponding ideal split epimorphisms.

\smallskip
The following theorem shows a connection between coherent actions and ideal actions in any ideally exact context.
	
\begin{theorem}\label{thmunital} 
Consider an ideally exact context \eqref{eq:situation} and let $\xi\colon U(B)\flat X\to X$ be a relative $U$-action. If $\xi$ is ideal, then $\xi$ is coherent.
\end{theorem}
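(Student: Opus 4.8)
The plan is to reduce the statement to \Cref{unit}: it suffices to produce a morphism $f\colon UF(X)\to A$ in $\bV$ exhibiting $(\id_X, U(\iota))$ as a morphism of split extensions from the canonical split extension $X\xrightarrow{\eta_X} UF(X)\xrightarrow{UF(\tau)} UF(0)$ (with section $UF(\iota)$) to the split extension $X\xrightarrow{k} A\xrightarrow{p} U(B)$ (with section $s$) associated with $\xi$. Since $\xi$ is ideal, \Cref{def_ideal} supplies a split epimorphism $p'\colon A'\to B$, $s'\colon B\to A'$ in $\bU$ together with an isomorphism $\sigma\colon U(A')\to A$ satisfying $p\circ\sigma = U(p')$ and $\sigma\circ U(s') = s$. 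The key idea is that $f$ should arise, through the adjunction $F\dashv U$, from a morphism living in $\bU$: I would let $\widetilde{k}\colon F(X)\to A'$ be the transpose of $\sigma^{-1}\circ k\colon X\to U(A')$ and set $f\coloneqq \sigma\circ U(\widetilde{k})$.

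With $f$ defined in this way, the left-hand square $f\circ\eta_X = k$ holds immediately, since $U(\widetilde{k})\circ\eta_X = \sigma^{-1}\circ k$ is exactly the defining transpose equation. The two remaining squares I would check by transporting the computation into $\bU$. For the retraction square $p\circ f = U(\iota)\circ UF(\tau)$, first observe that the transpose of $p'\circ\widetilde{k}\colon F(X)\to B$ equals $U(p')\circ\sigma^{-1}\circ k = p\circ k = 0$, because $k=\ker p$ in the pointed category $\bV$. Now the transpose of the zero map $X\to U(B)$ is precisely $\iota\circ F(\tau)\colon F(X)\to F(0)\to B$ (by naturality of $\eta$ together with the fact that the relevant composite factors through the zero object $0$ of $\bV$), so injectivity of the transpose bijection forces $p'\circ\widetilde{k} = \iota\circ F(\tau)$; applying $U$ and using $p\circ\sigma = U(p')$ then yields the identity. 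For the section square $f\circ UF(\iota) = s\circ U(\iota)$, I would use that $F$, being a left adjoint, preserves the initial object, so that $F(0)$ is initial in $\bU$; consequently both $\widetilde{k}\circ F(\iota)$ and $s'\circ\iota$ coincide with the unique arrow $F(0)\to A'$, and applying $U$ together with $s=\sigma\circ U(s')$ gives the equality.

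The step demanding the most care is the bookkeeping between the two roles of $\iota$: in $UF(\iota)$ it denotes the initial map $0\to X$ of $\bV$, whereas in $U(\iota)$ it denotes the initial map $F(0)\to B$ of $\bU$, and it is precisely keeping these apart that makes the retraction square land on $U(\iota)\circ UF(\tau)$ rather than on some other composite. Once the three squares are verified, \Cref{unit} yields that $\xi$ is coherent. As an alternative, the two right-hand squares can be checked simultaneously by testing against the jointly strongly epimorphic pair $(\eta_X, UF(\iota))$ noted after \Cref{unit}; this avoids the transpose computation, at the cost of using the section square to treat the $UF(\iota)$-component.
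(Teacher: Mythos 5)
Your proof is correct and takes essentially the same approach as the paper's: both reduce to \Cref{unit}, define the comparison morphism as the adjunction transpose of $\sigma^{-1}\circ k$ (giving a map $F(X)\to A'$ in $\bU$), and settle the section square by initiality of $F(0)$ in $\bU$. The only cosmetic difference is that you verify the retraction square via injectivity of the transpose bijection, whereas the paper tests it against the jointly strongly epimorphic pair $(\eta_X, UF(\iota))$ coming from protomodularity --- precisely the alternative you sketch in your final sentence.
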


\begin{proof}
Let \eqref{ideal_split} be a split epimorphism associated with $\xi$ under the equivalence~\eqref{equivalence}. Since $\xi$ is ideal, then there exists in $\bU$ a split epimorphism
\[\begin{tikzcd}
{A'} & {B}
\arrow["{p'}", shift left, from=1-1, to=1-2]
\arrow["{s'}", shift left, from=1-2, to=1-1]
\end{tikzcd}\] 
and an isomorphism $\sigma \colon U(A') \to A$ such that $U(p')=p\circ \sigma $ and $\sigma \circ U(s')=s$.

Let $(X,k)$ be a kernel of $p$, and let $k'=\sigma^{-1}\circ k$, so that $(X,k')$ is a kernel of $U(p')$. Since the unit $\eta$ of the adjunction is cartesian, then $\eta_{X}\colon X \to UF(X)$ defines a kernel of $UF(\tau_X)\colon UF(X)\to UF(0)$. Moreover, if $f\colon F(X) \to A'$ is the morphism in $\bU$ given by the universal property of the unit $\eta$ with respect to the kernel $k'\colon X \to U(A')$ of $U(p')$, that is
\[
\begin{tikzcd}
X & {UF(X)} & {F(X)} \\
{U(A')} && {A'}
\arrow["{\eta_{X}}", from=1-1, to=1-2]
\arrow["k'"', from=1-1, to=2-1]
\arrow["U(f)", dashed, from=1-2, to=2-1]
\arrow["f", dashed, from=1-3, to=2-3]
\end{tikzcd}
\]
then the diagram
\[
\begin{tikzcd}
X & {UF(X)} & {UF(0)} \\
X & {U(A')} & {U(B)}
\arrow["{\eta_{X}}", from=1-1, to=1-2]
\arrow[equal, from=1-1, to=2-1]
\arrow["{UF(\tau_X)}", shift left, from=1-2, to=1-3]
\arrow["U(f)"', dashed, from=1-2, to=2-2]
\arrow["{UF(\iota_X)}", shift left, from=1-3, to=1-2]
\arrow["{U(\iota_B)}", from=1-3, to=2-3]
\arrow["k'"', from=2-1, to=2-2]
\arrow["{U(p')}", shift left, from=2-2, to=2-3]
\arrow["{U(s')}", shift left, from=2-3, to=2-2]
\end{tikzcd}
\]
is commutative. Indeed, one has $U(f)\circ \eta_{X}=k'$. Moreover, by protomodularity, the pair $(\eta_{X}, UF(\iota_X))$ is jointly strongly epimorphic. Thus, we may prove that the morphisms $U(\iota_B)\circ UF(\tau_X)$ and $U(p')\circ U(f)$ are equal by composing them with the pair $(\eta_{X}, UF(\iota_X))$. Indeed, one has
\[U(\iota_B)\circ UF(\tau_X)\circ \eta_{X}=0=U(p')\circ k'=U(p')\circ U(f)\circ \eta_{X}.\]
Moreover, since $F(0)$ is initial in $\bU$, one has
\[
s' \circ \iota_B=f \circ F(\iota_X),
\]
and therefore
\[
U(\iota_B)\circ UF(\tau_X)\circ UF(\iota_X)=U(\iota_B)=U(p')\circ U(s')\circ U(\iota_B)= U(p')\circ U(f)\circ UF(\iota_X).
\]
Hence $U(\iota_B)\circ UF(\tau_X)=U(p')\circ U(f)$. As a consequence, by composing the diagrams below
\[\begin{tikzcd}
X & {UF(X)} & {UF(0)} \\
X & {U(A')} & {U(B)} \\
X & A & {U(B)}
\arrow["{\eta_{X}}", from=1-1, to=1-2]
\arrow[equal, from=1-1, to=2-1]
\arrow["{UF(\tau_X)}", shift left, from=1-2, to=1-3]
\arrow["{U(f)}"', from=1-2, to=2-2]
\arrow["{UF(\iota_X)}", shift left, from=1-3, to=1-2]
\arrow["{U(\iota_B)}", from=1-3, to=2-3]
\arrow["{k'}"', from=2-1, to=2-2]
\arrow[equal, from=2-1, to=3-1]
\arrow["{U(p')}", shift left, from=2-2, to=2-3]
\arrow["\sigma"', from=2-2, to=3-2]
\arrow["{U(s')}", shift left, from=2-3, to=2-2]
\arrow[equal, from=2-3, to=3-3]
\arrow["k"', from=3-1, to=3-2]
\arrow["p", shift left, from=3-2, to=3-3]
\arrow["s", shift left, from=3-3, to=3-2]
\end{tikzcd}\]
we have that the action $\xi$ is coherent.
\end{proof}

At present, the authors do not know whether the converse of Theorem~\ref{thmunital} holds in any ideally exact context. This is currently under investigation. However, when it holds, it establishes a convenient setting to study a well-behaved notion of action. This motivates the following definition.
\begin{definition}
An ideally exact context \eqref{eq:situation} admits a \emph{good theory of actions} (or, it is BAT\footnote{The acronym BAT is inspired by the notion of BIT-variety, where BIT stands for {\bf B}uona (good, in Italian) {\bf I}deal {\bf T}heory, introduced by A.~Ursini in~\cite{BIT}. Analogously, BAT stands for {\bf B}uona {\bf A}ction {\bf T}heory.}, for short) if all coherent actions are ideal, and all morphisms of such actions are ideal.
\end{definition}

In \Cref{cases} we present some case studies which have a good theory of actions.

\section{A connection with semidirect products in ideally exact categories}\label{semidirect}

During the preparation of this manuscript, a paper by G.~Janelidze appeared~\cite{semidir_IdeallyExact}. In his work, the author extended the notion of semidirect product from the semi-abelian to the ideally exact context. In this section, we make explicit some connections between G.~Janelidze's approach to the subject and our notions of relative $U$-action.

As we recalled in the introduction, the notion of internal action expresses its full potential when the base category is pointed, since in this case, split epimorphisms can be recovered by means of actions on their kernels. Actually, if the base category is not pointed, one can only express the split epimorphisms over an object $B$ in terms of algebras on a split epimorphism over another object $E$, whenever there is a morphism $E \to B$. This is not a major drawback, unless the latter are less involved than the former, as for instance when the category $\bU$ is pointed and $E$ is the initial$=$terminal object.

However, even if $\bU$ is not pointed, not all is lost. If we suppose that $\bU$ has an initial object $\0$ and pushouts of split monomorphisms along any map exist (and this is the case, since $\bU$ is ideally exact), protomodularity can be stated just in terms of the initial arrows alone: all the $f^*$'s are conservative if and only if just the $\iota^*$'s are. This suggests considering initial arrows as \emph{canonical} for describing semidirect products, so that an action becomes an algebra on $\Pt_{\bU}(\0)=(\bU\downarrow \0)$.

Before tackling actions directly, let us remain on the categories of points. Let $F\dashv U$ be a monadic adjunction as in \eqref{eq:situation}, with semi-abelian codomain and cartesian unit, and consider the diagram
\begin{equation}\label{eq:main}
\begin{aligned}
\xymatrix@C=10ex@R=10ex{
\Pt_{\bU}(B)\ar[r]_-{U'}^-{\bot} \ar[d]_-{\iota_B^*}^{\vdash}
&\Pt_{\bV}(U(B))\ar[d]_{U(\iota_B)^*}^{\vdash}\ar@/_2.7ex/[l]_-{F'}
\\
\Pt_{\bU}(F(0))\ar[r]_-{U_0'}^-{\bot}\ar@/_2.7ex/[u]_{B+(-)}
&\Pt_{\bV}(UF(0))\ar@/_2.7ex/[u]_{U(B)+_{UF(0)}(-)}\ar@/_2.7ex/[l]_-{F_0'}
}
\end{aligned}
\end{equation}
where
\begin{itemize}
\item[(i)] $U'$ maps any split epimorphism 
\[
\begin{tikzcd}
{A'} & {B}
\arrow["{p'}", shift left, from=1-1, to=1-2]
\arrow["{s'}", shift left, from=1-2, to=1-1]
\end{tikzcd}
\]
in $\bU$, to the split epimorphism 
\[
\begin{tikzcd}
{U(A')} & {U(B)}
\arrow["{U(p')}", shift left, from=1-1, to=1-2]
\arrow["{U(s')}", shift left, from=1-2, to=1-1]
\end{tikzcd}
\]
in $\bV$.
\item[(ii)] For any split epimorphism 
\[
\begin{tikzcd}
{A} & {U(B)}
\arrow["{p}", shift left, from=1-1, to=1-2]
\arrow["{s}", shift left, from=1-2, to=1-1]
\end{tikzcd}
\]
in $\bV$, $F'$ is defined by taking the pushout of $F(s)$ along the counit component $\varepsilon_B$ of the adjunction:
\[
\xymatrix@C=10ex{
F(A)\ar@<-.5ex>[d]_{F(p)}\ar[r]^-{i_2}
&B+_{FU(B)}F(A)\ar@<-.5ex>[d]_{[\id_B,\varepsilon_B \circ F(p)]}
\\
FU(B)\ar@<-.5ex>[u]_{F(s)}\ar[r]_-{\varepsilon_B}
&B. \ar@<-.5ex>[u]_{i_1}
}
\]
\item[(iii)] The definitions of $U'_0$ and $F'_0$ are similar, with $F(0)$ instead of $B$.
\end{itemize}
Straightforward calculations show that in diagram \eqref{eq:main}, the square of right adjoints commutes (up to isomorphism), and the same can be said for the square made of left adjoints. Notice that $F'\dashv U'$ are the adjunctions defined in~\cite[Section 2]{semidir_IdeallyExact}, whose notation we are adopting.

Now, consider the diagram
\begin{equation}\label{eq:main2}
\begin{aligned}
\xymatrix@C=14ex@R=10ex{
\Pt_{\bU}(F(0))\ar[r]_-{U_0'}^-{\bot}\ar[d]_-{U^{F(0)}}^-{\vdash}
&\Pt_{\bV}(UF(0))\ar@/_2.7ex/[l]_-{F_0'}\ar[dl]_{U_0''}^-{\vdash}
\\
\bV\ar@/_2.7ex/[u]_-{F^{F(0)}}\ar@/_2.7ex/[ur]_-{F_0''}
}
\end{aligned}
\end{equation}
where 
\[
U''_0\left(\raisebox{\depth/2}{\small \xymatrix{
A\ar@<+.5ex>[d]^{p}
\\
UF(0)\ar@<+.5ex>[u]^{s}
}}\right)=\ker p \,,\qquad F_0''(X)=\raisebox{\depth/2}{\small \xymatrix{
UF(0)+X\ar@<+.5ex>[d]^{[\id_{UF(0)},0]}
\\
UF(0)\ar@<+.5ex>[u]^{i_1}
}}\,,
\]
\[
U^{F(0)}=U_0''\circ U_0'\,,\qquad
F^{F(0)}=F_0'\circ F_0''\,.
\]
We notice that:
\begin{itemize}
\item[-] the adjunctions $F_0'\dashv U_0'$ and $F_0''\dashv U_0''$ are nothing but the ones defined in~\cite[Section 2]{semidir_IdeallyExact}, with $\mathcal A=\bU$, $\mathcal X=\bV$ and $F(0)$ replacing $B$;
\item[-] $F^{F(0)}\dashv U^{F(0)} $ is an adjoint equivalence (see~\cite[Theorem 3.17]{rel}, see also~\cite[Theorem 2.6]{IdeallyExact}).
\end{itemize}
Now, we can paste diagram \eqref{eq:main} with diagram \eqref{eq:main2}, and define
\[
U''=U_0''\circ U(\iota_B)^*\,,\qquad F''= (U(B)+_{UF(0)}(-))\circ F_0''\,.
\]
\[
U^{B}=U''\circ U'\,,\qquad
F^{B}=F'\circ F''\,.
\]
We notice that:
\begin{itemize}
\item[-] the adjunctions $F'\dashv U'$, $F''\dashv U''$ and $F^B\dashv U^B$ are nothing but the ones defined in~\cite[Section 2]{semidir_IdeallyExact}, with $\mathcal A=\bU$, $\mathcal X=\bV$; 
\item[-] there are natural isomorphisms $U^B\cong U^{F(0)}\circ \iota_B^*$ and $F^B\cong (B+(-))\circ F^{F(0)}$. 
\end{itemize}
In fact, $U^B$ is monadic (see~\cite[Theorem 2.1]{semidir_IdeallyExact}), so that one can describe points over $B$ as algebras on objects of $\bV$. More precisely, we let $B\#(-) = U^B\circ F^B$, i.e., for an object $X$ of $\bV$, $B\#X$ is given by a kernel 
\[
\Tilde{\kappa}_{B,X} \colon B\#X \to U(B+F(X))
\]
of the morphism
\[
U([\id_B,\iota_B \circ F(\tau_X)])\colon U(B+F(X)) \to U(B).
\]
Notice that $(-)\#(-)$ is functorial on both components, just like $(-)\flat(-)$ is. Furthermore, one can define a natural transformation $\gamma=\gamma_{B,X}$ as the unique dashed arrow making the following diagram commute: 
\begin{equation}\label{eq:gamma}
\begin{aligned}
\xymatrix@C=14ex{
U(B)\flat X\ar[r]^-{\kappa_{U(B),X}}\ar@{-->}[d]_{\gamma_{B,X}}
&U(B)+X\ar[r]^-{[\id_{U(B)},0]}\ar[d]_{[U(i_1),U(i_2)\circ\eta_X]}
&U(B)\ar@{=}[d]
\\
B\#X\ar[r]_-{\Tilde{\kappa}_{B,X}}
&U(B+F(X))\ar[r]_-{U([\id_B,\iota_B \circ F(\tau_X)])}
&U(B).
}
\end{aligned}  
\end{equation}

Indeed, $\gamma_{B}=\gamma_{B,-}\colon B\#(-)\Rightarrow B\flat(-)$ is a morphism of monads, and the functor $U'\colon \Pt_{\bU}(B) \to \Pt_{\bV}(U(B))$ translates into the functor $\overline U=(-)\circ \gamma_{B}\colon \bV^{B\#} \to \bV^{U(B)\flat }$ between the categories of algebras. Therefore, given an algebra $\xi'\colon B\#X \to X$, $\overline U(\xi')=\xi'\circ \gamma_{B,X} \colon U(B) \flat X \to X$ is an internal action in $\bV$.

\begin{lemma}\label{lm:azioni_in_algebre}
Given an ideally exact context \eqref{eq:situation}, a relative $U$-action $\xi\colon U(B)\flat X \to X$ is ideal if and only if there exists a $B\#$-algebra $\xi'\colon B\#X \to X$ such that $\xi'\circ \gamma_{B,X}=\xi$. 
\[
\xymatrix@C=16ex{
B\#X\ar@{-->}[dr]^-{\xi'}
\\
U(B)\flat X\ar[u]^{\gamma_{B,X}}\ar[r]_-{\xi}
&X.
}
\] \noproof
\end{lemma}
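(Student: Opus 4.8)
The plan is to read the statement off from the two monadic descriptions assembled just above. On the one hand, $U^B$ is monadic~\cite[Theorem 2.1]{semidir_IdeallyExact}, so the comparison functor $K_{\bU}\colon\Pt_{\bU}(B)\to\bV^{B\#}$ is an equivalence; on the other hand, since $\bV$ is semi-abelian, the comparison functor $K_{\bV}\colon\Pt_{\bV}(U(B))\to\bV^{U(B)\flat}$ recalled in \Cref{sec_prel} is an equivalence as well. By the very construction of $\gamma_{B}$, the functor $\overline U=(-)\circ\gamma_{B}$ corresponds to $U'$, so that the square
\[
\begin{tikzcd}
{\Pt_{\bU}(B)} & {\Pt_{\bV}(U(B))} \\
{\bV^{B\#}} & {\bV^{U(B)\flat}}
\arrow["{U'}", from=1-1, to=1-2]
\arrow["{K_{\bU}}"', from=1-1, to=2-1]
\arrow["{K_{\bV}}", from=1-2, to=2-2]
\arrow["{\overline U}"', from=2-1, to=2-2]
\end{tikzcd}
\]
commutes up to a natural isomorphism. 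First I would record that, under $K_{\bV}$, the split epimorphism~(\ref{ideal_split}) associated with $\xi$ corresponds to the $U(B)\flat$-algebra $(X,\xi)$, and that, by \Cref{def_ideal}, $\xi$ is ideal precisely when~(\ref{ideal_split}) lies in the essential image of $U'$.

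The core step is then purely formal: chasing this condition around the commuting square, and using that $K_{\bU}$ is essentially surjective, I would show that $\xi$ is ideal if and only if $(X,\xi)$ lies in the essential image of $\overline U$. For the easy implication of the lemma, if a $B\#$-algebra $\xi'\colon B\#X\to X$ with $\xi'\circ\gamma_{B,X}=\xi$ is given, then $(X,\xi)=\overline U(X,\xi')$ is literally in the image of $\overline U$, hence ideal. For the converse, idealness yields only an isomorphism $(X,\xi)\cong\overline U(Y,\theta')$ for some $B\#$-algebra $(Y,\theta')$, and the remaining task is to manufacture the required structure on $X$ itself.

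This upgrade from ``up to isomorphism'' to the on-the-nose equality $\xi'\circ\gamma_{B,X}=\xi$ on a fixed $X$ is the only real obstacle, and I expect to settle it by transport of structure. Because $\gamma_{B}$ is a morphism of monads, $\overline U$ is precomposition with $\gamma_{B,X}$ and hence fixes underlying objects; thus an isomorphism $\phi\colon X\to Y$ of $U(B)\flat$-algebras can be used to define the $B\#$-algebra $\xi'=\phi^{-1}\circ\theta'\circ(B\#\phi)\colon B\#X\to X$. A short diagram chase---combining the naturality square $(B\#\phi)\circ\gamma_{B,X}=\gamma_{B,Y}\circ(U(B)\flat\phi)$ with the compatibility $\phi\circ\xi=(\theta'\circ\gamma_{B,Y})\circ(U(B)\flat\phi)$ expressing that $\phi$ is an algebra morphism---then gives $\xi'\circ\gamma_{B,X}=\xi$, completing the converse. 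No heavy computation is needed beyond this chase, since the monadicity equivalences and the commutativity of the square above are already in place from the discussion preceding the statement.
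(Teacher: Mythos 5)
Your proposal is correct and follows exactly the route the paper has in mind: the paper marks this lemma as requiring no proof because it is immediate from the preceding discussion (monadicity of $U^B$, the translation of $U'$ into $\overline U = (-)\circ\gamma_B$, and the definition of ideal as lying in the essential image of $U'$), which is precisely the skeleton you assemble. Your transport-of-structure argument, upgrading the isomorphism $(X,\xi)\cong\overline U(Y,\theta')$ to an on-the-nose algebra structure on $X$ via $\xi'=\phi^{-1}\circ\theta'\circ(B\#\phi)$ and the naturality of $\gamma_{B,-}$, correctly fills in the one detail the paper's \emph{noproof} glosses over.
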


Our purpose is now to translate in terms of algebras the arguments given above concerning the categories of points. Let us begin from the square of right adjoints in~\eqref{eq:main}. It plainly translates into the following commutative square (whereas the former commutes only up to natural isomorphisms):
\begin{equation}\label{eq:algebras}
\begin{aligned}
\xymatrix{
\bV^{B\#}\ar[r]^-{\overline U}\ar[d]_{(-)\circ (\iota_B \# \id)}
&\bV^{U(B)\flat}\ar[d]^{(-)\circ (U(\iota_B)\flat \id)}
\\
\bV^{F(0)\#}\ar[r]_-{\overline U_0}
&\bV^{UF(0)\flat}
}
\end{aligned}    
\end{equation}
where, for any object $X$ of $\bV$, $\iota_B \# \id_X \colon F(0) \# X \to B \# X$ is the unique morphism such that
\[
\Tilde{\kappa}_{B,X} \circ (\iota_B \# \id_X) = (U(\iota_B) + \id_{F(X)}) \circ \Tilde{\kappa}_{F(0),X}.
\]

The $F(0)$ case deserves some analysis: indeed, since the left adjoints preserve colimits, one has 
\[
F(0)+F(X)\cong F(0+X)\cong F(X),
\]
so that for $B=F(0)$, diagram \eqref{eq:gamma} turns into
\[\xymatrix@C=14ex{
UF(0)\flat X\ar[r]^-{\kappa_{UF(0),X}}\ar@{-->}[d]_{\gamma_{F(0),X}}^{=\xi_0}
&UF(0)+X\ar[r]^-{[\id_{UF(0)},0]}\ar[d]_{[UF(\iota_X),\eta_X]}
&UF(0)\ar@{=}[d]
\\
F(0)\#X=X\ar[r]_-{\eta_X}
&UF(X)\ar[r]_-{UF(\tau_X)}
&UF(0).
}
\]
Therefore, it becomes evident that $\gamma_{F(0),X}$ coincides with the canonical action $\xi_{0}$ of $UF(0)$ on $X$.
As a consequence of the previous discussion, diagram \eqref{eq:algebras} simplifies:
\begin{equation}\label{eq:algebras2}
\begin{aligned}
\xymatrix@C=10ex{
\bV^{B\#}\ar[r]^-{(-)\circ\gamma_B}\ar[d]_{S}
&\bV^{U(B)\flat}\ar[d]^{(-)\circ (U(\iota_B)\flat \id)}
\\
\bV\ar[r]_-{[\xi_0]}
&\bV^{UF(0)\flat}
}
\end{aligned}    
\end{equation}
where $S$ is the canonical forgetful functor, and $[\xi_0](X)=\xi_{0}$.
    
\begin{proposition}\label{prop_char}
Given an ideally exact context \eqref{eq:situation} such that $U$ is full on isomorphisms, let us fix an object $B$ of $\bU$. Then, the following statements are equivalent:
\begin{itemize}
\item [$(i)$] All the coherent relative $U$-actions $U(B) \flat X \to X$ are ideal and all the morphisms of such actions are ideal;
\item [$(ii)$] The diagram \eqref{eq:algebras2} is a pullback;
\item [$(iii)$] The square of right adjoints in \eqref{eq:main} is a pseudopullback.
\end{itemize}
\end{proposition}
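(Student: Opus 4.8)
The plan is to establish the two equivalences $(i)\Leftrightarrow(ii)$ and $(ii)\Leftrightarrow(iii)$, using \eqref{eq:algebras2} as the bridge between the algebraic condition $(i)$ and the $2$-categorical condition $(iii)$. First I would unwind the strict pullback of the cospan underlying \eqref{eq:algebras2}. An object of the pullback of $\bV^{B\#}$'s target cospan $\bV^{U(B)\flat}\xrightarrow{(-)\circ((U\iota)\flat1)}\bV^{UF(0)\flat}\xleftarrow{[\xi_0]}\bV$ is a pair $(\xi,X)$ for which $\xi\circ(U(\iota)\flat\id_X)$ and $\xi_{0,X}$ coincide in $\bV^{UF(0)\flat}$; since $\xi_{0,X}$ has carrier $X$, this forces the carrier of $\xi$ to be $X$ and the identity $\xi\circ(U(\iota)\flat\id_X)=\xi_0$, i.e.\ exactly coherence. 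A short check on morphisms then shows that this pullback is the full subcategory $\mathcal{C}\subseteq\bV^{U(B)\flat}$ of coherent $B$-actions, and that the comparison functor $P\colon\bV^{B\#}\to\mathcal{C}$ determined by \eqref{eq:algebras2} is $\overline U=(-)\circ\gamma_B$ corestricted (it lands in $\mathcal{C}$ precisely because \eqref{eq:algebras2} commutes on the nose, equivalently by \Cref{thmunital}).

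Then $(i)\Leftrightarrow(ii)$ becomes a translation of ``$P$ is an isomorphism of categories'' into the four standard properties of $P$. By \Cref{lm:azioni_in_algebre} the (strict) image of $\overline U$ is exactly the class of ideal actions, so $P$ is surjective on objects if and only if every coherent $B$-action is ideal; identifying $\bV^{B\#}$ with $\Pt_{\bU}(B)$ and unwinding the definition of ideal morphism, $P$ is full if and only if every morphism of coherent $B$-actions is ideal. Faithfulness of $P$ is automatic, since $\overline U$ is restriction along the monad morphism $\gamma_B$. The conjunction of the surjectivity and fullness clauses is precisely $(i)$, so modulo the last property $(i)\Leftrightarrow(ii)$ follows.

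The remaining property, injectivity on objects, is where I expect the real obstacle to lie: it amounts to uniqueness of the lift $\xi'$ of a coherent action, equivalently to $\gamma_{B,X}$ being an epimorphism. I would prove that $\gamma_{B,X}$ is in fact a regular epimorphism: it is the map induced on kernels by $[U(i_1),U(i_2)\circ\eta_X]$ of \eqref{eq:gamma}, and I would verify that this comparison is a regular epimorphism and that in the semi-abelian $\bV$ a regular epimorphism of split extensions over $U(B)$ restricts to a regular epimorphism of kernels. Granting this, injectivity on objects is automatic and the four properties together yield $(i)\Leftrightarrow(ii)$. (Essential, rather than strict, uniqueness is already available from \Cref{lm:unique_xi}, since $U$ is full on isomorphisms; the content here is upgrading ``essentially unique'' to ``unique''.)

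Finally, for $(ii)\Leftrightarrow(iii)$ I would transport along the corner equivalences already recorded in the text: the monadic comparisons $\Pt_{\bU}(B)\simeq\bV^{B\#}$, $\Pt_{\bV}(U(B))\simeq\bV^{U(B)\flat}$, $\Pt_{\bV}(UF(0))\simeq\bV^{UF(0)\flat}$, together with the adjoint equivalence $F^{F(0)}\dashv U^{F(0)}$ giving $\bV\simeq\Pt_{\bU}(F(0))$ (see \cite[Theorem 3.17]{rel}), identify \eqref{eq:main} with \eqref{eq:algebras2} up to these equivalences, compatibly with all edge functors. Since being a pseudopullback is invariant under replacing the corners by equivalent categories, \eqref{eq:main} is a pseudopullback if and only if \eqref{eq:algebras2} is. The delicate point is then the collapse of ``pseudo'' into ``strict'' for \eqref{eq:algebras2}: because that square commutes strictly and its comparison $P$ is injective on objects by the previous paragraph, the canonical functor from the strict pullback to the pseudopullback is an isomorphism on the relevant part, so \eqref{eq:algebras2} is a strict pullback exactly when it is a pseudopullback. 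This is precisely the place where the epimorphy of $\gamma_{B,X}$ is needed; without it the comparison would only be an equivalence onto the pseudopullback and $(ii)$ would be strictly stronger than $(iii)$.
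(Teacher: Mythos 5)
Your overall architecture for $(i)\Leftrightarrow(ii)$ matches the paper's: identify the strict pullback of the cospan in \eqref{eq:algebras2} with the category of coherent $B$-actions, and translate ``the comparison functor is an isomorphism'' into surjectivity/injectivity on objects, fullness and faithfulness. However, there is a genuine gap at the step you yourself flag as the crux, injectivity on objects. You replace the paper's argument by the unproved claim that $\gamma_{B,X}$ is a regular epimorphism in \emph{every} ideally exact context; you give only a plan (``I would verify''), and the claim is substantive: it can be checked in the paper's case studies (rings, unit-closed varieties, MV-algebras, $\Set^{\op}$), but no general argument is offered, and none is obvious. Worse, if it were true it would also make fullness of the comparison automatic (compose the naturality square of $\gamma$ with the epimorphism $\gamma_{B,X}$), so both the morphism clause in $(i)$ and the hypothesis that $U$ is full on isomorphisms would become redundant---a strengthening the authors evidently could not establish, and a warning sign that your lemma is doing too much work. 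The detour is also unnecessary: the upgrade from essential to strict uniqueness does \emph{not} require $\gamma_{B,X}$ to be epic. This is what the paper means by ``the algebras version of \Cref{lm:unique_xi}'': given two $B\#$-algebra structures $\xi_1',\xi_2'$ on $X$ with $\xi_1'\circ\gamma_{B,X}=\xi_2'\circ\gamma_{B,X}=\xi$, the isomorphism of the corresponding points over $B$ produced by \Cref{lm:unique_xi} (using faithfulness and fullness on isomorphisms of $U$) is compatible with the kernel inclusions, hence restricts to the identity on the carrier $X$; an isomorphism of algebras whose underlying map is $\id_X$ forces $\xi_1'=\xi_2'$. This is where the hypothesis of the proposition enters, and your proof never genuinely uses it.

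The second gap is in $(ii)\Leftrightarrow(iii)$. Transporting along the corner equivalences to identify \eqref{eq:main} with \eqref{eq:algebras2} up to equivalence is fine and is what the paper does; but your collapse of ``pseudopullback'' to ``strict pullback'' for \eqref{eq:algebras2} is not correct as argued. Whether the canonical functor from the strict pullback of a cospan to its pseudopullback is an equivalence is a property of the \emph{cospan}, not of the comparison functor $P$ from the apex, so injectivity on objects of $P$ (even granted) cannot deliver it. The paper's ingredient, which you are missing, is that the leg $[\xi_0]\colon\bV\to\bV^{UF(0)\flat}$ has invertible-path lifting (any isomorphism out of $\xi_{0,X}$ in $\bV^{UF(0)\flat}$ is the image of an isomorphism in $\bV$, by naturality of $\xi_0$), so that \cite[Theorem 1]{JoyalStreet} applies and the strict pullback is equivalent to the pseudopullback. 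Even then, passing from ``the comparison is an equivalence'' to ``the comparison is an isomorphism'' (which is what $(ii)$ literally asserts) requires transporting algebra structures along isomorphisms and then re-using the uniqueness statement above---again the full-on-isomorphisms hypothesis, not an epimorphy property of $\gamma_{B,X}$. So both equivalences in your proposal ultimately lean on a lemma you have not proved and do not need, while the hypothesis that actually powers the proposition sits unused.
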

\begin{proof}
$(ii)\Rightarrow(i)$\ \ Let us consider the comparison functor $H=\langle S, (-)\circ\gamma_B \rangle$ below
\begin{equation}\label{eq:pb_algebras}
\begin{aligned}
\xymatrix@C=10ex{
\bV^{B\#}\ar@/^3ex/[drr]^-{(-)\circ\gamma_B}\ar@/_5ex/[ddr]_{S}\ar@{-->}[dr]^-{H}
\\
&\bV\times_{\bV^{UF(0)\flat}}\bV^{U(B)\flat}
\ar[r]^-{p_2}\ar[d]_{p_1}
&\bV^{U(B)\flat}\ar[d]^{(-)\circ (U(\iota_B)\flat \id)}
\\
&\bV\ar[r]_-{[\xi_0]}
&\bV^{UF(0)\flat}.
}
\end{aligned}    
\end{equation}
Since diagram \eqref{eq:algebras2} is a pullback, then $H$ is an isomorphism. Hence, we can define $\xi'=H^{-1}(X,\xi)$, for any coherent relative $U$-action $\xi \colon U(B) \flat X \to X$. Thus, by Lemma~\ref{lm:azioni_in_algebre}, the action $\xi$ is ideal. One may check that the same argument applies also to morphisms.

\smallskip
$(i)\Rightarrow(ii)$\ \ Conversely, assume that all coherent actions are ideals and that all the morphisms between coherent actions are ideal, and consider a coherent relative $U$-action $\xi \colon U(B) \flat X \to X$. Then, by the algebras version of Lemma~\ref{lm:unique_xi}, there is a unique $B\#$-algebra $\xi'$ such that $\xi'\circ \gamma_{B,X}=\xi$. Thus, the functor $H$ is bijective on objects. Since morphisms of $B\#$-algebras and those of $U(B)\flat$-algebras are determined by the same underlying maps, the proof that $H$ is bijective on arrows is trivial.

\smallskip
$(ii)\Leftrightarrow(iii)$\ \ Since the functor $[\xi_0]$ has invertible-path lifting, by \cite[Theorem 1]{JoyalStreet}, diagram \eqref{eq:algebras2} is a pullback if and only if it is a pseudopullback. On the other hand, by identifying $\bV^{F(0)\#}$ with $\bV$, one sees that \eqref{eq:algebras2} is a pseudopullback if and only if \eqref{eq:algebras} is, and this happens precisely when the equivalent diagram \eqref{eq:main} is a pseudopullback. 
\end{proof}

The following statement provides a characterisation of a class of ideally exact contexts admitting a good theory of actions.

\begin{corollary}\label{cor:main}
Consider an ideally exact context \eqref{eq:situation} such that the functor $U$ is full on isomorphisms. Then the following statements are equivalent:
\begin{itemize}
\item [$(i)$] the ideally exact context \eqref{eq:situation} is BAT;
\item [$(ii)$] for any object $B$ of $\bU$, \eqref{eq:algebras2} is a pullback;
\item [$(iii)$] for any object $B$ of $\bU$, the square of right adjoints in \eqref{eq:main} is a pseudopullback.
\end{itemize}   
\end{corollary}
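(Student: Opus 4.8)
The plan is to prove \Cref{cor:main} by observing that it is essentially a quantified (over all $B$) version of \Cref{prop_char}, so the bulk of the work is to reconcile the hypotheses and conclusions of the two statements. First I would unwind the definition of BAT: an ideally exact context is BAT precisely when, for \emph{every} object $B$ of $\bU$, all coherent $B$-actions are ideal \emph{and} all morphisms of coherent $B$-actions are ideal. But this is exactly condition $(i)$ of \Cref{prop_char} quantified universally over $B$. Since \Cref{prop_char} already establishes, under the standing hypothesis that $U$ is full on isomorphisms, the equivalence $(i)\Leftrightarrow(ii)\Leftrightarrow(iii)$ for a \emph{fixed} $B$, the corollary follows by simply taking the conjunction over all objects $B$.

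The key steps, in order, are as follows. First I would note that the hypothesis of \Cref{cor:main} (that $U$ is full on isomorphisms) is identical to the standing hypothesis of \Cref{prop_char}, so \Cref{prop_char} applies verbatim for each individual $B$. Second, I would match up the statements termwise: condition $(i)$ of \Cref{cor:main} (the context is BAT) holds if and only if, for every $B$, condition $(i)$ of \Cref{prop_char} holds; condition $(ii)$ of \Cref{cor:main} is, by definition, condition $(ii)$ of \Cref{prop_char} asserted for every $B$; and likewise for $(iii)$. Third, I would invoke the pointwise equivalences from \Cref{prop_char}: for each fixed $B$ we have $(i)_B\Leftrightarrow(ii)_B\Leftrightarrow(iii)_B$. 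Quantifying universally over $B$ preserves these biconditionals, yielding $\bigl(\forall B\,(i)_B\bigr)\Leftrightarrow\bigl(\forall B\,(ii)_B\bigr)\Leftrightarrow\bigl(\forall B\,(iii)_B\bigr)$, which is precisely $(i)\Leftrightarrow(ii)\Leftrightarrow(iii)$ of the corollary.

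The one point requiring genuine care — and what I expect to be the only real obstacle — is verifying that the definition of BAT truly matches the universal quantification of \Cref{prop_char}$(i)$. The definition of BAT speaks of ``all coherent actions'' and ``all morphisms of such actions'' without explicitly indexing by the acting object, whereas \Cref{prop_char}$(i)$ fixes $B$ and speaks of coherent $B$-actions and their morphisms. I would therefore make explicit that a coherent action is by definition a coherent $B$-action for some object $B$, and that a morphism of coherent actions over a common base is a morphism of coherent $B$-actions; hence ``all coherent actions are ideal and all their morphisms are ideal'' unpacks to ``for every $B$, all coherent $B$-actions are ideal and all their morphisms are ideal,'' which is exactly $(i)_B$ holding for all $B$. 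Once this bookkeeping is made precise, the proof reduces to the single sentence that the corollary is obtained by quantifying \Cref{prop_char} over all objects $B$ of $\bU$.
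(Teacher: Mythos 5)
Your proposal is correct and follows exactly the paper's own route: the paper proves \Cref{cor:main} as ``an immediate consequence of \Cref{prop_char}'', i.e., by quantifying the pointwise equivalences of \Cref{prop_char} over all objects $B$. Your additional bookkeeping---checking that the definition of BAT unpacks to condition $(i)$ of \Cref{prop_char} holding for every $B$---is precisely the implicit content of that one-line proof, so nothing is missing and nothing extra is needed.
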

\begin{proof}
The proof is an immediate consequence of Proposition~\ref{prop_char}
\end{proof}
\begin{remark}\label{rm:this_remark}
Given an ideally exact context \eqref{eq:situation}, consider a morphism $f\colon E\to B$ in $\bU$ together with the following commutative diagram:
\begin{equation}\label{eq:U_cartesian}
\begin{aligned}
\xymatrix{
\bV^{B\#}\ar[r]^-{\overline{U}_B}\ar[d]_{(-)\circ (f\# \id)}\ar@<-12ex>[dd]_{(-)\circ (\iota_B\# \id)}
&\bV^{U(B)\flat}\ar[d]^{(-)\circ (U(f)\flat \id)}\ar@<+14ex>[dd]^{(-)\circ (U(\iota_B)\flat \id)}
\\
\bV^{E\#}\ar[r]^-{\overline{U}_E}\ar[d]_{(-)\circ (\iota_E\# \id)}
&\bV^{U(E)\flat}\ar[d]^{(-)\circ (U(\iota_E)\flat \id)}
\\
\bV^{F(0)\#}\ar[r]_-{\overline{U}_{F(0)}}
&\bV^{UF(0)\flat}.
}
\end{aligned}    
\end{equation}
By the usual cancellation property, if the outer and the bottom squares are pullbacks, so is the top square. This fact has interesting consequences. 

Indeed, the upper square in \eqref{eq:U_cartesian} can be interpreted as a naturality square of a natural transformation between two contravariant functors, namely
\begin{align*}
\#\text{-alg}\colon& \bU^{\op}\to \Cat \qquad B\mapsto \bV^{B\#},\\
\flat\text{-alg}\colon& \bU^{\op}\to \Cat \qquad B\mapsto \bV^{U(B)\flat}.
\end{align*}
The $\overline{U}$'s in the diagram are the components of the natural transformation \[\overline U\colon \#\text{-alg} \Rightarrow \flat\text{-alg}\] induced by the monad morphism $\gamma=\gamma_{B,X}$. Moreover, one can define a component-wise left adjoint by lifting the left adjoints $\overline F_0 \dashv \overline U_0$.

Natural transformations whose naturality squares are pullbacks are called \emph{carte\-sian} in the literature---an unfortunate terminology choice, in our setting. However, cartesian natural transformations have an important feature: if the domain category has a terminal object, then all the components of the transformation are determined by the component on the terminal object. This is exactly what happens here, since a terminal object of $\bU^{\operatorname{op}}$ is initial in $\bU$. 
For this reason, let us call \emph{$0$-determined} an ideal context where $\overline U$ yields a cartesian natural transformation. We can add the following statement to the equivalent conditions of Corollary~\ref{cor:main}.

\begin{itemize}
\item [$(iv)$] \emph{The ideally exact context \eqref{eq:situation} is $0$-determined.}
\end{itemize}
All we have just said about actions and algebras can be stated as well for the categories of points, but, in such a case, the functors
\[
U'\colon \Pt_{\bU}(B)\to \Pt_{\bV}(U(B))
\]
would be components of a pseudocartesian\footnote{This notion, namely a pseudonatural transformation such that all the naturality squares are pseudopullbacks, does not seem to have received much attention in the literature.} transformation.
\end{remark}

We leave for future work the investigation of the consequences of Remark~\ref{rm:this_remark}.

\section{Case studies}\label{cases}
Our aim is now to characterise coherent actions in the cases where $\bU$ is a unit-closed variety of non-associative algebras, where $\bU$ is the variety of MV-algebras or the variety of product algebras, and where $\bU$ is the dual of the topos of pointed sets. Our investigation shows that these ideally exact contexts have a good theory of actions.

\subsection{Varieties of non-associative algebras}\label{Var}

The aim of this section is to describe coherent actions in the framework of \emph{varieties of non-associative algebras} over a field $\bF$. We think of those as collections of algebras satisfying a chosen set of identities. We refer the reader to~\cite{VdL-NAA} for more details.

A \emph{non-associative algebra} over $\bF$ is a vector space $X$ equipped with a bilinear operation
\[
X \times X \to X \colon (x,y) \mapsto xy,
\]
called the \emph{multiplication}. The category of all non-associative algebras over $\bF$ is denoted by~$\Alg$ and its morphisms are the linear maps that preserve the multiplication.

\begin{definition}\label{def identity variety}
An \emph{identity} of a non-associative algebra $X$ is a non-associative polynomial $\varphi=\varphi(x_1,\dots ,x_n)$ such that $\varphi(x_1, \dots , x_n)=0$ for all $x_1, \ldots, x_n \in X$. We say that the algebra $X$ \emph{satisfies} the identity $\varphi$.
\end{definition}

\begin{definition}
Let $I$ be a set of identities. The \emph{variety of non-associative algebras} $\bV$ determined by $I$ is the class of all non-associative algebras that satisfy all the identities of $I$.
\end{definition}

We observe that every variety of non-associative algebras $\bV$ forms a full subcategory of~$\Alg$ and is a semi-abelian category.

\begin{examples}\label{Examples varieties}{\ }
\begin{enumerate}
\item $\AbAlg$ is the variety of \emph{abelian} algebras, which is determined by the identity $xy=0$. It is isomorphic to the category $\Vect$ of $\bF$-vector spaces and it is the only non-trivial variety of algebras which is an abelian category.

\item $\Assoc$ is the variety of \emph{associative} algebras, which is determined by \emph{associativity} $x(yz)=(xy)z$.

\item $\CAssoc$ is the subvariety of $\Assoc$ of commutative associative algebras.

\item $\Lie$ is the variety of \emph{Lie algebras}, which is determined by $x^2=0$ and the \emph{Jacobi identity}, that is $x(yz)+y(zx)+z(xy)=0$.

\item $\Leib$ is the variety of \emph{(right) Leibniz algebras}~\cite{loday}, which is determined by the \emph{(right) Leibniz identity}, that is $(xy)z-(xz)y-x(yz)=0$.

\item $\Alt$ is the variety of \emph{alternative algebras}, which is determined by the identities $(yx)x-yx^2=0$ and $x(xy)-x^2y=0$. We recall that every associative algebra is alternative, while an example of an alternative algebra which is not associative is given by the \emph{octonions} $\mathbb{O}$.
\end{enumerate}
\end{examples}

In order to study the notion of coherent actions for a variety of non-associative algebras~$\bV$, we need to work with the so-called \emph{unit-closed} varieties of algebras.

\begin{definition}\cite{unital}
A variety of non-associative algebras $\bV$ is said to be \emph{unit-closed} if, for any algebra $X$ of $\bV$, the algebra $\langle X,1 \rangle$ obtained by adjoining to $X$ the external element $1$, together with the identities $x \cdot 1 = 1 \cdot x = x$, is still an object of~$\bV$. 
\end{definition}

For instance, the varieties $\Assoc$, $\CAssoc$ and $\Alt$ are unit-closed, while the varieties $\Leib$, or any variety of anti-commutative algebras over a field of characteristic different from $2$, such as the category $\Lie$ of Lie algebras, are examples of not unit-closed varieties. Thus, the condition of being unit-closed is related to the set of identities which determine the variety $\bV$.

When a variety $\bV$ is unit-closed, it is possible to define the subcategory $\bV_1$ of unital algebras of $\bV$, with the arrows being the algebra morphisms of $\bV$ that preserve the unit element. Of course, $\bV_1$ is an ideally exact category and it is not pointed, since the initial object is the field $\bF$, while the terminal one is the zero algebra $\{ 0 \}$.

\begin{remark}
Let $\bV$ be a unit-closed variety of non-associative algebras. In a similar way to what happens in the category of rings (see Example~\ref{monadic_ring}), a monadic adjunction associated with the unique morphism $\bF \to \{ 0 \}$ in $\bV_1$ is
\begin{equation}\label{eq:situation_alg}
\begin{tikzcd}
{\bV_1} & {\bV,}
\arrow[""{name=0, anchor=center, inner sep=0}, "U"', from=1-1, to=1-2]
\arrow[""{name=1, anchor=center, inner sep=0}, "F"', curve={height=14pt}, from=1-2, to=1-1]
\arrow["\dashv"{anchor=center, rotate=-90}, draw=none, from=1, to=0]
\end{tikzcd}
\end{equation}
where $U$ is the forgetful functor and $F$ maps every algebra $X$ of $\bV$ to the semidirect product $\bF \ltimes X$ with multiplication
\[
(\alpha,x)\cdot(\alpha',x')=(\alpha \alpha', xx' + \alpha x' + \alpha' x)
\] 
and unit element $(1,0_X)$.
\end{remark}

We can now provide a characterisation of \emph{coherent actions} in the context of unit-closed varieties of non-associative algebras. We refer the reader to~\cite{WRAAlg, CigoliManciniMetere, XabiMancini, Tim} for a complete description of actions and their representability in varieties of non-associative algebras.

At first, we observe that the converse of Theorem~\ref{thmunital} holds for any unit-closed variety~$\bV$.

\begin{proposition}\label{conv_algebras}
Let $\bV$ be a unit-closed variety of non-associative algebras and consider the ideally exact context \eqref{eq:situation_alg}. Let
\[
\xi\colon U(B)\flat X\to X
\]
be a relative $U$-action in $\bV$. If $\xi$ is coherent, then $\xi$ is an ideal action.
\end{proposition}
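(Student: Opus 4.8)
The plan is to turn the coherent action $\xi$ into a unital algebra lifting its associated split extension, the key point being that coherence forces the unit of $B$ to act as a genuine unit. By the Bourn--Janelidze correspondence, $\xi\colon U(B)\flat X\to X$ determines a split extension $X\xrightarrow{k} A\xrightarrow{p} U(B)$ with section $s$ in $\bV$, where $A=U(B)\ltimes_\xi X$ has underlying vector space $U(B)\oplus X$ and multiplication
\[
(b,x)(b',x')=\bigl(bb',\ b\cdot x'+x\cdot b'+xx'\bigr),
\]
the maps $b\cdot x$ and $x\cdot b$ being the left and right components of $\xi$. The idea is that the element $e:=s(1_B)=(1_B,0)$ should be a two-sided unit for $A$, so that $A$ lifts to an object $A'$ of $\bV_1=\bU$ and $\xi$ is exhibited as ideal via the identity isomorphism $\sigma=\id_A$.

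First I would identify the canonical action $\xi_0$. Since $F(0)=\bF$ and $F(X)=\bF\ltimes X$, a direct computation in $\bF\ltimes X$ shows that $\xi_0\colon UF(0)\flat X\to X$ is simply scalar multiplication, sending $\alpha\flat x$ to $\alpha x$. The map $U(\iota)\colon UF(0)=\bF\to U(B)$ sends $\alpha\mapsto\alpha 1_B$. Hence, reading the coherence equation $\xi\circ(U(\iota)\flat\id_X)=\xi_0$ on the bilinear action components, one gets $(\alpha 1_B)\cdot x=\alpha x=x\cdot(\alpha 1_B)$ for all $\alpha\in\bF$ and $x\in X$; specializing to $\alpha=1$ yields the two identities $1_B\cdot x=x$ and $x\cdot 1_B=x$.

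These are exactly what is required for $e=(1_B,0)$ to be a two-sided unit of $A$: indeed $(1_B,0)(b',x')=(b',\,1_B\cdot x')=(b',x')$ and symmetrically on the right. This unit verification is the crux of the argument; the rest is bookkeeping. Since $A$ already lies in $\bV$ (being the domain of a split extension in $\bV$) and now carries a two-sided unit, the pair $A':=(A,e)$ is a unital algebra of $\bV$, i.e.\ an object of $\bV_1$, with $U(A')=A$. Concretely, this $A'$ realizes the abstract lift $f\colon UF(X)\to A$ produced by coherence in \Cref{unit}, via $(\alpha,x)\mapsto(\alpha 1_B,x)$.

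Finally I would check that $p$ and $s$ lift to $\bV_1$. As $p\circ s=\id$, we have $p(e)=1_B$, so $p$ is unit-preserving and defines $p'\colon A'\to B$ in $\bV_1$; likewise $s'(1_B)=e$ makes $s'\colon B\to A'$ unit-preserving, with $p'\circ s'=\id_B$. Then $U(p')=p$, $U(s')=s$, and with $\sigma=\id_A\colon U(A')\to A$ the diagram of \Cref{def_ideal} commutes, so $\xi$ is ideal. The only genuinely non-formal step is the translation of the coherence diagram into the element-wise identities $1_B\cdot x=x=x\cdot 1_B$; this rests on the explicit description of $U(B)\flat X$ and of the action in the variety $\bV$, for which I would appeal to the standard account of actions of non-associative algebras cited above. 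The corresponding statement for morphisms follows by the same lifting, since any morphism of the underlying split extensions in $\bV$ compatible with $p$ and $p'$ necessarily sends $s(1_B)$ to $s'(1_B)$ and hence preserves units.
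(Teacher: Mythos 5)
Your proof is correct and takes essentially the same route as the paper's: both arguments show that coherence forces $s(1_B)$ to be a two-sided unit of $A$ --- you via the explicit semidirect-product decomposition $A\cong U(B)\oplus X$ and the bilinear identities $1_B\cdot x=x=x\cdot 1_B$, the paper via the comparison morphism $f$ of \Cref{unit}, the decomposition $a=s(b)+k(x)$, and the computation $s(1_B)k(x)=f(1,0)f(0,x)=f(0,x)=k(x)$, which is the same calculation in different notation. Both proofs then conclude by lifting $(p,s)$ to $\bV_1$ with $\sigma=\id_A$.
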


\begin{proof}
Let $\xi \colon U(B) \flat X \to X$ be a coherent action and let 
\begin{equation*}
\begin{tikzcd}
X \arrow [r, "k"]
& A \arrow[r, shift left, "p"] &
U(B) \ar[l, shift left, "s"]
\end{tikzcd}
\end{equation*}
be a split extension associated with $\xi$. Since $\xi$ is coherent, there exists a morphism
\[
f \colon U(\bF \ltimes X) \to A
\]
such that the following diagram in $\bV$ 
\begin{equation*}
\begin{tikzcd}
X \arrow [r, "\eta_{X}"] \ar[d, equal]
&U(\bF \ltimes X) \arrow[r, shift left, "UF(\tau_X)"] \ar[d, dashed, "f"']&
U(\bF) \ar[l, shift left, "UF(\iota_X)"]\ar[d, "U(\iota_B)"]\\
X \arrow [r, "k"]
& A \arrow[r, shift left, "p"] &
U(B) \ar[l, shift left, "s"]
\end{tikzcd}
\end{equation*}
is commutative. Protomodularity implies that $f$ is the unique arrow induced by $\id_X$ and $U(\iota_B)$, i.e., $f(\alpha,x)=s(\alpha 1_B) + k(x)$ for every $(\alpha,x) \in \bF \ltimes X$. 

We observe that $A$ is a unital algebra with unit $s(1_B)$. Indeed, every $a \in A$ may be written as $s(b)+k(x)$, for some $b \in B$ and $x \in X$, and
\[
s(1_B)a=s(1_B)(s(b)+k(x))=s(1_B)s(b)+s(1_B)k(x)=s(b)+s(1_B)k(x)=a
\]
since
\[
s(1_B)k(x)=f(1,0_X)f(0,x)=f((1,0_X)\cdot(0,x))=f(0,x)=k(x).
\]
In a similar way, one may check that $as(1_B)=a$. Hence, we have proved that, if
\[
\begin{tikzcd}
{A} & {U(B)}
\arrow["{p}", shift left, from=1-1, to=1-2]
\arrow["{s}", shift left, from=1-2, to=1-1]
\end{tikzcd}
\] 
is a split epimorphism in $\bV$ associated with $\xi$ under the equivalence \eqref{equivalence}, then it is in fact a split epimorphism in $\bV_1$, since $A$ is a unital algebra and both $p$ and $s$ preserve the units. Finally, the isomorphism $\sigma$ of Definition~\ref{def_ideal} is the identity map~$\id_A$ and we can conclude that $\xi$ is an ideal action.
\end{proof}

This allows us to state the following.

\begin{theorem}\label{char_algebras}
Let $\bV$ be a unit-closed variety of non-associative algebras and consider the ideally exact context \eqref{eq:situation_alg}. Let
\[
\xi\colon U(B)\flat X\to X
\]
be a relative $U$-action with associated split epimorphism 
\[
\begin{tikzcd}
{A} & {U(B).}
\arrow["{p}", shift left, from=1-1, to=1-2]
\arrow["{s}", shift left, from=1-2, to=1-1]
\end{tikzcd}
\] 
Then $\xi$ is a coherent action if and only if $A$ is a unital algebra and $s(1_B)=1_A$.
\end{theorem}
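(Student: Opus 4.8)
The plan is to prove both implications of the biconditional, leveraging \Cref{conv_algebras} and the characterization of coherence via morphisms of split extensions from \Cref{unit}. The key observation is that \Cref{conv_algebras} already does most of the work for one direction: it shows that if $\xi$ is coherent, then the associated split epimorphism $(p,s)$ lives in $\bV_1$, with $A$ unital and the section $s$ preserving units. The task here is to extract from that argument the precise statement that $A$ is a unital algebra and that $s(1_B)=1_A$, and then to establish the converse.

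First I would handle the forward direction. Assume $\xi$ is coherent. By \Cref{conv_algebras}, the unique morphism $f\colon U(\bF\ltimes X)\to A$ induced by $\id_X$ and $U(\iota)$ is given explicitly by $f(\alpha,x)=s(\alpha 1_B)+k(x)$, and the argument there shows that $A$ is unital with unit $s(1_B)$; that is, $s(1_B)=1_A$. So this direction is essentially a restatement of what was already proven, and I would simply cite \Cref{conv_algebras} and record the conclusion $s(1_B)=1_A$.

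Next I would prove the converse. Suppose $A$ is a unital algebra and $s(1_B)=1_A$. The goal is to produce the morphism $f\colon UF(X)=U(\bF\ltimes X)\to A$ of \Cref{unit} making the relevant diagram a morphism of split extensions, which by that lemma is equivalent to coherence. The natural candidate is $f(\alpha,x)=s(\alpha 1_B)+k(x)=\alpha\, s(1_B)+k(x)=\alpha\, 1_A+k(x)$. I would verify that this $f$ is a morphism in $\bV$: additivity and $\bF$-linearity are clear, and multiplicativity reduces to checking $f((\alpha,x)\cdot(\alpha',x'))=f(\alpha,x)\cdot f(\alpha',x')$ using the multiplication $(\alpha,x)(\alpha',x')=(\alpha\alpha',xx'+\alpha x'+\alpha' x)$ in $\bF\ltimes X$ together with the facts that $1_A$ acts as a unit in $A$, that $k$ is an algebra morphism, and that the action $\xi$ encodes the cross-terms $1_A\cdot k(x')$ and $k(x)\cdot 1_A$. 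Once $f$ is a morphism, the commutativities $f\circ\eta_X=k$, $p\circ f=U(\iota)\circ UF(\tau)$, and $f\circ UF(\iota)=s\circ U(\iota)$ follow from the explicit formula and $s(1_B)=1_A$, exhibiting the required morphism of split extensions; \Cref{unit} then gives coherence.

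The main obstacle I anticipate is the multiplicativity check for $f$ in the converse direction, since the bilinear decomposition of a product in $A$ must be matched term-by-term against the semidirect-product multiplication and the action $\xi$. The delicate point is ensuring that the mixed terms $\alpha\,1_A\cdot k(x')$ and $\alpha'\,k(x)\cdot 1_A$ reproduce exactly $k(\alpha x')$ and $k(\alpha' x)$; this uses that $1_A=s(1_B)$ is a genuine two-sided unit in $A$ and that $k$ is $\bF$-linear, so that $1_A\cdot k(x')=k(x')$ and scaling commutes with $k$. Everything else is routine bookkeeping, and I would keep that calculation brief.
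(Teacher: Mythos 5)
Your proposal is correct and takes essentially the same route as the paper: the forward direction is obtained by citing \Cref{conv_algebras}, and the converse constructs the morphism $f(\alpha,x)=\alpha\,1_A+k(x)$ induced by $\id_X$ and $U(\iota)$ and shows it yields the split pullback / morphism of split extensions required by \Cref{unit}. The only difference is that you spell out the multiplicativity verification for $f$ (the cross-terms absorbed by the unit $1_A=s(1_B)$), which the paper's proof leaves implicit.
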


\begin{proof}
If $\xi$ is a coherent action, then by Proposition~\ref{conv_algebras} $A$ is a unital algebra and $s(1_B)=1_A$. Conversely, if $A$ is a unital algebra and $s(1_B)=1_A$, then 
\[
p(1_A)=p(s(1_B))=(p \circ s)(1_B)=1_B
\]
and the unique morphism $f \colon U(\bF \ltimes X) \to A$ induced by $\id_X$ and $U(\iota_B)$ makes the following a split pullback diagram:
\[\begin{tikzcd}
{U(\bF \ltimes X)} & {U(\bF)} \\
A & {U(B).}
\arrow["{UF(\tau_X)}", shift left, from=1-1, to=1-2]
\arrow["f"', dashed, from=1-1, to=2-1]
\arrow["{UF(\iota_X)}", shift left, from=1-2, to=1-1]
\arrow["{U(\iota_B)}", from=1-2, to=2-2]
\arrow["p", shift left, from=2-1, to=2-2]
\arrow["s", shift left, from=2-2, to=2-1]
\end{tikzcd}\]
Thus, $\xi$ is a coherent action.
\end{proof}

\begin{remark}
The condition $s(1_B)=1_A$ automatically implies that
\[
\begin{tikzcd}
{A} & {U(B)}
\arrow["{p}", shift left, from=1-1, to=1-2]
\arrow["{s}", shift left, from=1-2, to=1-1]
\end{tikzcd}
\] 
is an ideal split epimorphism, i.e., it is a split epimorphism of unital algebras.
\end{remark}

\begin{example}
Let $\bV=\CAssoc$ and consider the internal action $\xi$ associated with the split epimorphism
\[
\begin{tikzcd}
{U(\bF^2)} & {U(\bF)}
\arrow["{U(\pi_1)}", shift left, from=1-1, to=1-2]
\arrow["{U(s)}", shift left, from=1-2, to=1-1]
\end{tikzcd}
\] 
under the equivalence \eqref{equivalence}, where $\bF^2$ is the direct product of two copies of the field $\bF$, $\pi_1(a,b)=a$ and $s(a)=(a,a)$. Then~$\xi$ is a coherent action since $s(1)=(1,1)=1_{\bF^2}$.

If we replace the section $s$ with the canonical inclusion on the first component~$i_1$, then the corresponding internal action is not coherent since $i_1(1)=(1,0) \neq 1_{\bF^2}$.
\end{example}

\begin{example}
Let $\bV=\Assoc$, let $A=\operatorname{UT}_2(\bF)$ be the algebra of $2 \times 2$ upper triangular matrices and let $B$ be the subalgebra of $A$ of matrices of the form
\[
\begin{pmatrix}
a & 0 \\
0 & 0 \\
\end{pmatrix}
\]
with $a \in \bF$. We observe that both $A$ and $B$ are unital algebras with
\[
1_A= \begin{pmatrix}
1 & 0 \\
0 & 1 \\
\end{pmatrix} \neq \begin{pmatrix}
1 & 0 \\
0 & 0 \\
\end{pmatrix} = 1_B,
\]
thus $B$ is not a unital subalgebra of $A$. We consider the split epimorphism 
\begin{equation}\label{alg_counterex}
\begin{tikzcd}
{U(A)} & {U(B)}
\arrow["{U(p)}", shift left, from=1-1, to=1-2]
\arrow["{s}", shift left, from=1-2, to=1-1]
\end{tikzcd}    
\end{equation}
in $\Assoc$ defined by
\[
p \begin{pmatrix}
a & b \\
0 & c \\
\end{pmatrix}=\begin{pmatrix}
a & 0 \\
0 & 0 \\
\end{pmatrix} \quad \text{and} \quad
s \begin{pmatrix}
a & 0 \\
0 & 0 \\
\end{pmatrix}=\begin{pmatrix}
a & 0 \\
0 & 0 \\
\end{pmatrix}.
\]
Since $s(1_B) \neq 1_A$, the internal action associated with the split epimorphism \eqref{alg_counterex} under the equivalence \eqref{equivalence} is not coherent. Indeed, if 
\[
X=\ker U(p) = \bigg\lbrace \begin{pmatrix}
0 & b \\
0 & c \\
\end{pmatrix} \; \bigg\vert \; b,c \in \bF \bigg\rbrace
\]
and $k \colon X \to U(A)$ is the canonical inclusion, then the unique linear map $f \colon U(\bF \ltimes X) \to U(A)$ which makes the following diagram commute
\begin{equation*}
\begin{tikzcd}
X \arrow [r, "\eta_X"] \ar[d, equal]
&U(\bF \ltimes X) \arrow[r, shift left, "UF(\tau_X)"] \ar[d, dashed, "f"']&
U(\bF) \ar[l, shift left, "UF(\iota_X)"]\ar[d, "U(\iota_B)"]\\
X \arrow [r, "k"]
&U(A) \arrow[r, shift left, "U(p)"] &
U(B) \ar[l, shift left, "s"]
\end{tikzcd}
\end{equation*}
is defined by
\[
f\bigg(a, \begin{pmatrix}
0 & b \\
0 & c \\
\end{pmatrix}\bigg)=\begin{pmatrix}
a & b \\
0 & c \\
\end{pmatrix}.
\]
One may easily check that $f$ is not an algebra morphism, since
\[
f\bigg(0, \begin{pmatrix}
0 & 1 \\
0 & 1 \\ 
\end{pmatrix}\bigg) \cdot f\bigg(1, \begin{pmatrix}
0 & 1 \\
0 & 0 \\ 
\end{pmatrix}\bigg)=\begin{pmatrix}
0 & 1 \\
0 & 1 \\
\end{pmatrix} \cdot \begin{pmatrix}
1 & 1 \\
0 & 0 \\
\end{pmatrix} = \begin{pmatrix}
0 & 0 \\
0 & 0 \\
\end{pmatrix},
\]
while
\begin{align*}
f\bigg( \bigg( 0, \begin{pmatrix}
0 & 1 \\
0 & 1 \\ 
\end{pmatrix} \bigg) \cdot \bigg(1, \begin{pmatrix}
0 & 1 \\
0 & 0 \\ 
\end{pmatrix}\bigg) \bigg)=f \bigg( 0, \begin{pmatrix}
0 & 1 \\
0 & 1 \\
\end{pmatrix}\bigg)=\begin{pmatrix}
0 & 1 \\
0 & 1 \\
\end{pmatrix}.
\end{align*}
\end{example}

As an immediate consequence of Theorem~\ref{char_algebras}, we get the following.

\begin{theorem}
Let $\bV$ be a unit-closed variety of non-associative algebras over $\bF$. The ideally exact context
\[
\begin{tikzcd}
{\bV_1} & {\bV}
\arrow[""{name=0, anchor=center, inner sep=0}, "U"', from=1-1, to=1-2]
\arrow[""{name=1, anchor=center, inner sep=0}, "F"', curve={height=14pt}, from=1-2, to=1-1]
\arrow["\dashv"{anchor=center, rotate=-90}, draw=none, from=1, to=0]
\end{tikzcd}
\]
is BAT.
\end{theorem}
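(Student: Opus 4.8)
The plan is to check the two defining conditions of a BAT context separately: that every coherent $B$-action is ideal, and that every morphism between coherent $B$-actions is an ideal morphism. The first condition requires no new work, since it is exactly the content of \Cref{conv_algebras}, already established for an arbitrary unit-closed variety. Hence the whole argument reduces to the statement about morphisms, and here the characterization in \Cref{char_algebras} will do all the heavy lifting.

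For the morphism part, I would begin with a morphism of coherent $B$-actions, presented as an arrow $h\colon A_1\to A_2$ in $\bV$ that commutes with the retractions and sections of the two associated split epimorphisms over $U(B)$; that is, $p_2\circ h=p_1$ and $h\circ s_1=s_2$. By \Cref{char_algebras}, each $A_i$ is a unital algebra with $s_i(1_B)=1_{A_i}$. The decisive observation is then that $h$ preserves units automatically, because compatibility with the sections gives $h(1_{A_1})=h(s_1(1_B))=s_2(1_B)=1_{A_2}$. Consequently $h$ is a morphism of $\bV_1$, and---viewing the $A_i$, $p_i$, $s_i$ as data in $\bV_1$, as the remark following \Cref{char_algebras} permits---one may simply take $h'=h$ to obtain a lift with $U(h')=h$. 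This exhibits $h$ as an ideal morphism and completes the verification.

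I do not anticipate a genuine obstacle: the result is truly an immediate consequence of \Cref{char_algebras}, which identifies coherent actions with precisely the unit-preserving split epimorphisms. The only subtlety worth stating explicitly is that an arbitrary point morphism in $\bV$ between two such objects already lives in $\bV_1$; but this is forced by its compatibility with the sections, so it costs nothing. Assembling the object part (\Cref{conv_algebras}) with the morphism part yields that the context $\bV_1 \to \bV$ is BAT.
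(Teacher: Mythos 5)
Your proposal is correct and follows essentially the same route as the paper: the object part is delegated to \Cref{conv_algebras}, and the morphism part is settled by \Cref{char_algebras} together with the computation $h(1_{A_1})=h(s_1(1_B))=s_2(1_B)=1_{A_2}$, showing $h$ lifts to $\bV_1$. No gaps.
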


\begin{proof}
By Proposition~\ref{conv_algebras}, we have that every coherent action is ideal. It remains to show that every morphism between ideal split epimorphisms is ideal.

Let $B$ be an algebra of $\bV_1$ and consider a morphism 
\[
\begin{tikzcd}
{A_1} & {} & {A_2} \\
& {U(B)}
\arrow["h", from=1-1, to=1-3]
\arrow["{p_1}"', shift right, from=1-1, to=2-2]
\arrow["p_2"', from=1-3, to=2-2]
\arrow["{s_1}"'{pos=0.3}, shift right, from=2-2, to=1-1]
\arrow["s_2"', shift right=2, from=2-2, to=1-3]
\end{tikzcd}
\]
between ideal split epimorphisms over $U(B)$. It follows from Theorem~\ref{char_algebras} that $A_1$ and $A_2$ are unital algebras, with $1_{A_1}=s_1(1_B)$ and $1_{A_2}=s_2(1_B)$. Thus, one has
\[
h(1_{A_1})=h(s_1(1_B))=s_2(1_B)=1_{A_2},
\]
i.e., $h$ is an ideal morphism.
\end{proof}

We conclude this section by noting that if we replace the field $\bF$ with the ring of integers $\bZ$, then both Proposition~\ref{conv_algebras} and Theorem~\ref{char_algebras} hold in the ideally exact context \eqref{eq:situation_rng}, and we may state the following.

\begin{theorem}\label{BAT_Rng}
The ideally exact context
\[\begin{tikzcd}
{\Ring} & {\Rng}
\arrow[""{name=0, anchor=center, inner sep=0}, "U"', from=1-1, to=1-2]
\arrow[""{name=1, anchor=center, inner sep=0}, "F"', curve={height=14pt}, from=1-2, to=1-1]
\arrow["\dashv"{anchor=center, rotate=-90}, draw=none, from=1, to=0]
\end{tikzcd}\]
is BAT. \noproof
\end{theorem}

\subsection{Varieties of hoops, MV-algebras and product algebras}\label{MV}

The algebraic structure now known as a \emph{hoop} was first introduced by B.~Bosbach in~\cite{bosbach1,bosbach2}, where it appeared under the name \emph{complementary semigroups} (\emph{komplementäre Halbgruppen}). The term hoop itself was later coined in an unpublished manuscript by J.~R.~Büchi and T.~M.~Owens~\cite{BuchiOwens}, and has since become standard in the literature on substructural logics and residuated structures. We refer the reader to \cite{hoop} for more details about the structures of hoops.

In this section, we investigate the notion of coherent actions in some varieties of hoops. We aim to prove that the variety of \emph{Wajsberg hoops} and that of \emph{product hoops} define two BAT ideally exact contexts.

\begin{definition}
A \emph{hoop} is an algebra $H=(H, \cdot, \w, 1)$ of type $(2,2,0)$ such that
\begin{enumerate}
\item[(H1)] $(H,\cdot, 1)$ is a commutative monoid;
\item[(H2)] $x \w x=1;$
\item[(H3)] $x \cdot (x \w y)=y \cdot (y \w x)$;
\item[(H4)] $(x \cdot y)\w z= x \w (y \w z)$,
\end{enumerate}
for every $x,y,z \in H$.
\end{definition}

Given two hoops $H,K$, a \emph{hoop homomorphism} is a map $f \colon H \to K$ that preserves both the binary operations $\cdot$ and $\w$, and the constant $1$.

\begin{remark}
Every hoop $H$ is endowed with a partial order $\leq$, which is defined by the following equivalent conditions: for every $x,y \in H$
\begin{enumerate}
\item $x \leq y$;
\item $x \w y=1$;
\item there exists $z \in H$ such that $x=z \cdot y$.
\end{enumerate}
\end{remark}

Given a homomorphism of hoops $f \colon H \to K$, its kernel
\[
F=\ker f =\{ h \in H \mid f(h)=1 \}
\]
is a \emph{filter} of $H$, i.e., a subset $F\subseteq H$ such that $(F, \cdot, 1)$ is a submonoid of $(H, \cdot, 1)$, which is upward closed with respect to the partial order $\leq$ of $H$. Conversely, every filter $F$ of $H$ may be seen as the kernel of the canonical projection $\pi \colon H \to H/F$.

\begin{remark}\cite{rel}
The variety $\Hp$ of hoops is a semi-abelian category.
\end{remark}

As mentioned above, we focus on the study of coherent actions in two relevant subvarieties of the variety $\Hp$: the variety of \emph{Wajsberg hoops} and that of \emph{product hoops}. 

\begin{definition}
A \emph{bounded hoop} is a hoop $H$ with a constant $0$ such that
\begin{itemize}
\item[(H5)] $0 \w x=1$,
\end{itemize}
for any $x \in H$.
\end{definition}

\begin{definition}
A \emph{Wajsberg hoop} is a hoop $H$ such that
\begin{itemize}
\item[(W)] $(x \w y)\w y=(y \w x)\w x$,
\end{itemize}
for every $x,y \in H$.
\end{definition}

It was proved in~\cite{CignoliMundici} that bounded Wajsberg hoops are term equivalent to the class of \emph{MV-algebras}~\cite{chang1, chang2}, which constitutes the equivalent algebraic semantics of \emph{Łukasiewicz logic}~\cite{Luka}.

\begin{definition}\cite{chang1}
An \emph{MV-algebra} is an algebra $A=(A, \oplus, \neg, 0)$ of type $(2,1,0)$ such that 
\begin{itemize}
\item[(MV1)] $(A, \oplus,0)$ is a commutative monoid;
\item[(MV2)] $\neg \neg x=x$;
\item[(MV3)] $x \oplus \neg 0=\neg 0$;
\item[(MV4)] $ \neg (\neg x \oplus y)\oplus y= \neg (\neg y \oplus x)\oplus x$,
\end{itemize}
for every $x,y \in A$.
\end{definition}

\begin{example}
The set $A=[0,1]$ endowed with the operations $x \oplus y=\min \{ x+y,1 \}$, $\neg x=1-x$ and constant $0$ is an MV-algebra. In fuzzy logic~\cite{Fuzzy1, Fuzzy2, hajek}, this algebra is called the \emph{standard MV-algebra}, as it forms the standard real-valued semantics of Łukasiewicz logic.
\end{example}

\begin{remark}
The class of MV-algebras forms an algebraic variety which we denote by $\MV$, whose initial object is the two-element Boolean algebra $L_2=\{ 0,1 \}$, with $1= \neg 0$, while the terminal object is the trivial MV-algebra $\{ 1 \}$. Hence, the variety $\MV$ is not semi-abelian, since it is not pointed, but, as shown in~\cite{rel}, it is protomodular~\cite{caratt}. As a consequence, $\MV$ is ideally exact and the semi-abelian category $(\MV\downarrow L_{2})$ is equivalent to the category $\WH$ of \emph{Wajsberg hoops}.
\end{remark}

\begin{remark}\label{rem_MV}
Given an MV-algebra $A$, one may define the constant $1=\neg 0$ and two binary operations $\odot$ and $\to$ on $A$ as follows:
\[
x \odot y \coloneqq \neg (\neg x \oplus \neg y), \quad x \to y \coloneqq \neg x \oplus y
\]
for every $x,y \in A$. One may check that $(A,\odot, \to, 1)$ is a bounded Wajsberg hoop with $0$ as bottom element. Moreover, $\neg 1=0$ and $\neg x = x \to 0$.

This defines the forgetful functor $U\colon \MV \to \WH$ that forgets the constant~$0$. The left adjoint $M\colon \WH \to \MV$ of $U$, which is called the \emph{MV-closure} in~\cite{MVclos}, maps any Wajsberg hoop $H=(H,\cdot,\w, 1)$ to the MV-algebra 
\[
M(H)=(H \times \{0,1\},\cdot, \w, 0,1),
\]
where $0 \coloneqq (1,0)$, $1 \coloneqq (1,1)$,
\[
(a,i)\cdot (b,j)=\begin{cases}
(a \cdot b,1), \quad &\text{if} \ i=j=1, \\
(a \w b,0), &\text{if} \ i=1, \ j=0,\\
(b \w a,0), & \text{if} \ i=0, \ j=1,\\
((a\w (a \cdot b))\w b,0), &\text{if} \ i=j=0.
\end{cases}
\]
and
\[
(a,i)\w (b,j)=\begin{cases}
(a \w b,1), &\text{if} \ i=j=1, \\
(a \cdot b,0), &\text{if} \ i=1, \ j=0,\\
((a\w (a \cdot b))\w b,1), & \text{if} \ i=0, \ j=1,\\
(b \w a,1), & \text{if} \ i=j=0.
\end{cases}
\]

One may check that the unit $\eta\colon 1_{\WH}\Rightarrow UM$ of the adjunction $M \dashv U$ is cartesian. In fact, for any Wajsberg hoop $H$, the homomorphism
\[
\eta_H \colon H \to UM(H) \colon x \mapsto (x,1)
\]
defines a kernel of 
\begin{align*}
U(M(\tau_H))\colon UM(H)&\to U(L_{2}) \colon (x,i)\mapsto i,
\end{align*}
where $L_2=M(\{ 1 \})$. Thus, by Remark~\ref{cartesian}, we have that
\begin{equation}\label{eq:situation_MV}
\begin{tikzcd}
{\MV} & {\WH}
\arrow[""{name=0, anchor=center, inner sep=0}, "U"', from=1-1, to=1-2]
\arrow[""{name=1, anchor=center, inner sep=0}, "M"', curve={height=14pt}, from=1-2, to=1-1]
\arrow["\dashv"{anchor=center, rotate=-90}, draw=none, from=1, to=0]
\end{tikzcd}
\end{equation}
is, up to an equivalence, the adjunction associated with the unique map $L_2 \to \{1\}$ in~$\MV$.
\end{remark}

Our aim is now to characterise coherent actions in the ideally exact context \eqref{eq:situation_MV}. We first observe that, as for unit-closed varieties of algebras, the converse of Theorem~\ref{thmunital} actually holds also in $\WH$.
 
\begin{proposition}\label{conv_MV}
Consider the ideally exact context \eqref{eq:situation_MV} and let
\[
\xi\colon U(B)\flat X\to X
\]
be a relative $U$-action in $\WH$. If $\xi$ is coherent, then $\xi$ is an ideal action.
\end{proposition}

\begin{proof}
Let $\xi\colon U(B)\flat X\to X$ be a coherent action in $\WH$ and let
\begin{equation*}\label{ideal_split_WH}
\begin{tikzcd}
{A} & {U(B).}
\arrow["{p}", shift left, from=1-1, to=1-2]
\arrow["{s}", shift left, from=1-2, to=1-1]
\end{tikzcd}
\end{equation*}
be a split epimorphism associated with $\xi$ under the equivalence \eqref{equivalence}. By Lemma~\ref{unit}, there exists a morphism $f\colon UM(X)\to A$ in $\WH$ such that the following diagram
\[
\begin{tikzcd}
X & {UM(X)} & {U(L_2)} \\
X & A & {U(B)}
\arrow["{\eta_{X}}", from=1-1, to=1-2]
\arrow[equal, from=1-1, to=2-1]
\arrow["{UM(\tau_X)}", shift left, from=1-2, to=1-3]
\arrow["f"', dashed, from=1-2, to=2-2]
\arrow["{UM(\iota_X)}", shift left, from=1-3, to=1-2]
\arrow["{U(\iota_B)}", from=1-3, to=2-3]
\arrow["k"', from=2-1, to=2-2]
\arrow["p", shift left, from=2-2, to=2-3]
\arrow["s", shift left, from=2-3, to=2-2]
\end{tikzcd}
\]
commutes. For any $a \in A$, we have that $ s(0_{B})\to a \in X$, since 
\[
p(s(0_{B})\to a)=ps(0_{B})\to p(a)=0_{B}\to p(a)=1_B.
\]
It follows that $(s(0_{B})\to a,1) \in UM(X)$. Moreover
\begin{align*}
1_A=f(1_A,1)=f((1_A,0)\to (s(0_{B})\to a,1))&=f(1_A,0)\to f(s(0_{B})\to a,1)\\
&=(f \circ UM(\iota_X))(0)\to (f \circ \eta_{X})(s(0_{B})\to a)\\
&=s(0_{B})\to (s(0_{B})\to a)\\
&=(s(0_{B})\cdot s(0_{B}))\to a\\
&=s(0_{B}\cdot 0_{B})\to a\\
&=s(0_{B})\to a.
\end{align*}
Thus, $s(0_{B})$ is the bottom element of $A$ and $A$ is a bounded Wajsberg hoop. This means that $A$ defines an MV-algebra $A'$, an isomorphism $\sigma$ between $U(A')$ and $A$, and a split epimorphism
\[
\begin{tikzcd}
{A'} & {B}
\arrow["{p'}", shift left, from=1-1, to=1-2]
\arrow["{s'}", shift left, from=1-2, to=1-1]
\end{tikzcd}
\] 
such that $p \circ \sigma=U(p')$ and $\sigma^{-1} \circ s=U(s')$. Thus, the internal action $\xi$ is ideal.
\end{proof}

This allows us to state the following characterisation.

\begin{theorem}\label{car_MV}
Consider the ideally exact context \eqref{eq:situation_MV} and let 
\[
\xi\colon U(B)\flat X\to X
\]
be a relative $U$-action in $\WH$ with associated split epimorphism 
\[\begin{tikzcd}
{A} & {U(B).}
\arrow["{p}", shift left, from=1-1, to=1-2]
\arrow["{s}", shift left, from=1-2, to=1-1]
\end{tikzcd}\] 
Then $\xi$ is a coherent action if and only if $A$ is a bounded Wajsberg hoop with bottom element $s(0_B)$.
\end{theorem}

\begin{proof}
If $\xi$ is coherent, then by Proposition~\ref{conv_MV} $A$ is a bounded Wajsberg hoop with bottom element $s(0_B)$. Conversely, if $A$ is a bounded Wajsberg hoop with bottom element $0_A=s(0_B)$, then $A \cong U(A')$ for some MV-algebra $A'$ and
\[
p(0_A)=p(s(0_B))=(p \circ s)(0_B)=0_B.
\]
Hence, $p=U(p')$, $s=U(s')$ for some split epimorphism
\[
\begin{tikzcd}
{A'} & {B}
\arrow["{p'}", shift left, from=1-1, to=1-2]
\arrow["{s'}", shift left, from=1-2, to=1-1]
\end{tikzcd}
\]
in $\MV$. Thus, the action $\xi$ is ideal, and consequently, it is coherent.
\end{proof}

\begin{remark}
The condition $s(0_B)=0_A$ automatically implies that
\[
\begin{tikzcd}
{A} & {U(B)}
\arrow["{p}", shift left, from=1-1, to=1-2]
\arrow["{s}", shift left, from=1-2, to=1-1]
\end{tikzcd}
\] 
is an ideal split epimorphism, i.e., it is a split epimorphism of bounded Wajsberg hoops.
\end{remark}

\begin{remark}
It follows from Theorem~\ref{car_MV} that there is a morphism 
\[
\Tilde{f} \colon UM(X) \to U(A')
\]
making the following diagram in $\WH$ commutative:
\begin{equation}\label{diag_MV}
\begin{tikzcd}
X & {UM(X)} & {U(L_{2})} \\
X & {U(A')} & {U(B).}
\arrow["{\eta_{X}}", from=1-1, to=1-2]
\arrow[equal, from=1-1, to=2-1]
\arrow["{UM(\tau_X)}", shift left, from=1-2, to=1-3]
\arrow["\Tilde{f}"', dashed, from=1-2, to=2-2]
\arrow["{UM(\iota_X)}", shift left, from=1-3, to=1-2]
\arrow["{U(\iota_B)}", from=1-3, to=2-3]
\arrow["k"', from=2-1, to=2-2]
\arrow["{U(p')}", shift left, from=2-2, to=2-3]
\arrow["{U(s')}", shift left, from=2-3, to=2-2]
\end{tikzcd}
\end{equation}
This morphism arises from the universal property of the unit $\eta$. Indeed, there exists a morphism $f\colon M(X)\to A'$ in $\MV$ such that the following diagram in $\WH$
\[
\begin{tikzcd}
X & {UM(X)}\\
{U(A')}
\arrow["{\eta_{X}}", from=1-1, to=1-2]
\arrow["k"', from=1-1, to=2-1]
\arrow["{U(f)}", dashed, from=1-2, to=2-1]
\end{tikzcd}
\]
is commutative. It follows that $U(f)(x,1)=k(x)$, $U(f)(1_A,0)=0_A$ and
\[
U(f)(x,0)=U(f)((x,1)\to (1_{A},0))=U(f)(x,1)\to U(f)(1_A,0)=k(x) \to 0_A=\neg k(x).
\]
Furthermore, one may check that
\[
(U(f) \circ UM(\iota_X))(i)=U(f)(1_A,i)=i_A=U(s')(i_B)=(U(s') \circ U(\iota_B))(i),
\]
for any $i=0,1$,
\[
(U(p')\circ U(f))(x,1)=U(p')(k(x))=1_B=U(\iota_B)(1)=(U(\iota_B) \circ UM(\tau_X))(x,1)
\]
and
\begin{align*}
(U(p')\circ U(f))(x,0)&=U(p')(\neg k(x))=\neg U(p')(k(x))=\\
&=\neg 1_B=0_B=U(\iota_B)(0)=(U(\iota_B) \circ UM(\tau_X))(x,0).
\end{align*}
Hence, $\Tilde{f}=U(f)$ makes diagram \eqref{diag_MV} commute.
\end{remark}
 
\begin{example}
Let $A$ be an MV-algebra, let $\pi_{1} \colon A\times A \to A$ be the projection on the first component and let $s(a)=(a,a)$ be a section of $\pi_1$. We consider the split epimorphism in $\MV$
\[
\begin{tikzcd}
{A \times A} & A.
\arrow["{\pi_{1}}", shift left, from=1-1, to=1-2]
\arrow["s", shift left, from=1-2, to=1-1]
\end{tikzcd}
\]
Thus, $U(A \times A)$ is a bounded Wajsberg hoop with bottom element $(0_A,0_A)=s(0_A)$, and the internal action $\xi$ induced by the split epimorphism
\[
\begin{tikzcd}
{U(A \times A)} & U(A)
\arrow["{U(\pi_{1})}", shift left, from=1-1, to=1-2]
\arrow["U(s)", shift left, from=1-2, to=1-1]
\end{tikzcd}
\]
under the equivalence \eqref{equivalence}, is coherent.
    
In fact, if $X \coloneqq \ker U(\pi_{1})=\{(1_A,a) \mid a \in A\}$ and $k \colon X \to U(A \times A)$ denotes the canonical inclusion, then there exists a unique morphism $f\colon UM(X)\to U(A \times A)$ in $\WH$ which makes the following diagram 
\[
\begin{tikzcd}
X & {UM(X)} & {U(L_{2})} \\
X & {U(A\times A)} & {U(A)}
\arrow["{\eta_{X}}", from=1-1, to=1-2]
\arrow[equal, from=1-1, to=2-1]
\arrow["{UM(\tau_X)}", shift left, from=1-2, to=1-3]
\arrow["f"', dashed, from=1-2, to=2-2]
\arrow["{UM(\iota_X)}", shift left, from=1-3, to=1-2]
\arrow["{U(\iota_A)}", from=1-3, to=2-3]
\arrow["k"', from=2-1, to=2-2]
\arrow["{U(\pi_{1})}", shift left, from=2-2, to=2-3]
\arrow["{U(s)}", shift left, from=2-3, to=2-2]
\end{tikzcd}
\]
commutative. In particular, for any $a \in A$, one has
\[
f((1_A,a),1)=(1_A,a)
\]
and
\[
f((1_A,a),0)=(1_A,a)\to(0_A,0_A)=(0_A,\neg a).
\]
\end{example}

\begin{example}
Let $A$ be an MV-algebra and consider the split epimorphism
\begin{equation}\label{MV_counterex}
\begin{tikzcd}
{U(A \times A)} & U(A).
\arrow["{U(\pi_{1})}", shift left, from=1-1, to=1-2]
\arrow["s'", shift left, from=1-2, to=1-1]
\end{tikzcd}  
\end{equation}
in $\WH$, where $\pi_{1}(a,b)=a$ and $s'(a)=(a,1_A)$. Since $s'(0)=(0_A,1_A) \neq (0_A,0_A)$, the internal action induced by the split epimorphism \eqref{MV_counterex} under the equivalence \eqref{equivalence} is not coherent. In fact, there does not exist a morphism
\[
f \colon UM(X) \to U(A \times A)
\]
in $\WH$ such that the diagram
\[
\begin{tikzcd}
X & {UM(X)} & {U(L_2)} \\
X & {U(A\times A)} & {U(A)}
\arrow["{\eta_{X}}", from=1-1, to=1-2]
\arrow[equal, from=1-1, to=2-1]
\arrow["{UM(\tau_X)}", shift left, from=1-2, to=1-3]
\arrow["f"', dashed, from=1-2, to=2-2]
\arrow["{UM(\iota_X)}", shift left, from=1-3, to=1-2]
\arrow["{U(\iota_A)}", from=1-3, to=2-3]
\arrow["k"', from=2-1, to=2-2]
\arrow["{U(\pi_{1})}", shift left, from=2-2, to=2-3]
\arrow["{s'}", shift left, from=2-3, to=2-2]
\end{tikzcd}\]
commutes. One may check that the unique map which makes the diagram commutative in $\Set$ is defined by
\[
f(a,1)=(1_A,a), \qquad f(a,0)=(1_A,a) \to (0_A,1_A).
\]
However, $f$ is not a morphism in $\WH$ since 
\[
f(x,0) \to f(y,1) = (1_A,y)
\]
while
\[
f((x,0) \to (y,1))=(1_A,x \oplus y),
\]
for every $x,y \in A$.
\end{example}

As a direct consequence of Theorem~\ref{car_MV}, we get the following.

\begin{theorem}\label{BAT_MV}
The ideally exact context
\[
\begin{tikzcd}
{\MV} & {\WH}
\arrow[""{name=0, anchor=center, inner sep=0}, "U"', from=1-1, to=1-2]
\arrow[""{name=1, anchor=center, inner sep=0}, "M"', curve={height=14pt}, from=1-2, to=1-1]
\arrow["\dashv"{anchor=center, rotate=-90}, draw=none, from=1, to=0]
\end{tikzcd}
\]
is BAT.
\end{theorem}

\begin{proof}
By Proposition~\ref{conv_MV}, we know that every coherent action is ideal. It remains to show that every morphism between ideal split epimorphisms is ideal.

Let $B$ be an MV-algebra and consider a morphism 
\[
\begin{tikzcd}
{A_1} & {} & {A_2} \\
& {U(B)}
\arrow["h", from=1-1, to=1-3]
\arrow["{p_1}"', shift right, from=1-1, to=2-2]
\arrow["p_2"', from=1-3, to=2-2]
\arrow["{s_1}"'{pos=0.3}, shift right, from=2-2, to=1-1]
\arrow["s_2"', shift right=2, from=2-2, to=1-3]
\end{tikzcd}
\]
between ideal split epimorphisms over $U(B)$. It follows from Theorem~\ref{car_MV} that $A_i$ is a bounded Wajsberg hoop with bottom element $0_{A_i}=s_i(0_B)$, for any $i=1,2$, and
\[
h(0_{A_1})=h(s_1(0_B))=s_2(0_B)=0_{A_2}.
\]
Thus, since the class of MV-algebras is term-equivalent to the class of bounded Wajsberg hoops, there exists a morphism
\[
\begin{tikzcd}
{A_1'} & {} & {A_2'} \\
& {B'}
\arrow["h'", from=1-1, to=1-3]
\arrow["{p_1'}"', shift right, from=1-1, to=2-2]
\arrow["p_2'"', from=1-3, to=2-2]
\arrow["{s_1'}"'{pos=0.3}, shift right, from=2-2, to=1-1]
\arrow["s_2'"', shift right=2, from=2-2, to=1-3]
\end{tikzcd}
\]
between split epimorphisms over $B$, such that $U(h')=h$. Hence, $h$ is an ideal morphism.
\end{proof}

Results similar to those of Proposition~\ref{conv_MV}, Theorem~\ref{car_MV} and Theorem~\ref{BAT_MV} can be obtained for \emph{product hoops} and \emph{product algebras}. We start by recalling the following definitions.
 
\begin{definition}\cite{basichoop}
A \emph{basic hoop} is a hoop $H$ such that
\begin{itemize}
\item[(B)] $((x \to y)\to z)\to (((y \to x)\to z)\to z)=1$,
\end{itemize}
for any $x,y,z \in H$.
\end{definition}

\begin{definition}\cite{prodred}
A \emph{product hoop} is a basic hoop $H$ satisfying the identity
\begin{itemize}
\item[(P)] $(y \to z)\vee ((y\to (x \cdot y))\to x)=1$,
\end{itemize}
for any $x,y,z \in H$, where
\[
x \vee y=((x \to y) \to y) \wedge ((y \to x) \to x)
\]
and
\[
x \wedge y=x \cdot (x \to y).
\]
\end{definition}

We denote by $\PH$ the variety of product hoops. Similarly to the case of Wajsberg hoops, it was proved in~\cite{prodred} that bounded product hoops are term equivalent to the class of \emph{product algebras}, which were introduced in~\cite{godo} and constitute the equivalent algebraic semantics of \emph{product logic}~\cite{prodred, CignoliTorrens1}. 

\begin{definition}\cite{hajek}
A \emph{BL-algebra} is an algebra $A=(A, \vee, \wedge, \cdot, \to, 0,1)$ such that
\begin{itemize}
\item[(BL1)] $(A,\vee,\wedge, \cdot, \to, 0, 1)$ is a bounded residuated lattice~\cite{res_lattices};
\item[(BL2)] $x \wedge y=x \cdot (x \to y)$;
\item[(BL3)] $(x \to y)\vee (y \to x)=1$,
\end{itemize}
for any $x,y \in A$.
\end{definition}

\begin{definition}\cite{godo}
A \emph{product algebra} is a BL-algebra $A$ satisfying the identity
\[
\neg x \vee ((x \to x \cdot y) \to y)=1,
\]
for any $x,y \in A$, where $\neg x \coloneqq x \to 0.$
\end{definition}

We denote by $\PA$ the variety of product algebras. Since $\PA$ is protomodular, it is an ideally exact category and it has as initial object the two-element Boolean algebra $L_2$, and as terminal one the trivial product algebra $\{ 1 \}$.

\begin{example}
An example of product algebra is given by the set $A=[0,1]$ endowed with the operations $x \cdot_{\prod} y=xy$ and
\[
x \to_{\prod} y=\begin{cases} 1, \ \ &\text{if} \ x \leq y, \\
\frac{y}{x}, &\text{otherwise.} \end{cases}
\]
In fuzzy logic, this is called the \emph{standard product algebra}, as it forms the standard real-valued semantics of \emph{product logic}.
\end{example}

\begin{remark}\label{rem_prod}
Given a product algebra $A=(A, \vee, \wedge, \cdot, \to, 0,1)$, one may check that $(A,\cdot,\to,1)$ is a bounded product hoop. This defines the forgetful functor $U\colon \PA \to \PH$. In~\cite{prod} the authors provided a description of the left adjoint of such $U$. We now recall the construction that freely adds the constant~$0$ to a product hoop. 

Let $H$ be a product hoop. It may be shown that every element $x \in H$ can be decomposed into Boolean and cancellative components given by the following terms:
\[
b(x)=(x \to x^{2})\to x, \quad c(x)=x \to x^{2}.
\]
Moreover, the set $G(H)=\{ b(x) \mid x \in H\} $ is a \emph{generalised Boolean algebra}~\cite{gen_bool}, $C(H)=\{ c(x) \mid x \in H \}$ is a \emph{cancellative hoop}~\cite{hoop} and the MV-closure $B(H)=M(G(H))$ of $G(H)$ is a Boolean algebra. 
 
Now, let $H^{\bullet} \coloneqq \{ x^{\bullet} \mid x \in H\}$ and let $\sim$ be the equivalence relation on $H \cup H^{\bullet}$ defined by
\[
x \sim x' \quad \text{if and only if} \quad b(x)=b(x') \ \text{and} \ \neg b(x)\vee_{H}c(x)=\neg b(x')\vee_{H} c(x'),
\]
where $\vee_{H} \colon B(H)\times C(H)\to C(H)$ is defined by
\[
b \vee_{H} c=\begin{cases}
b \vee c, \ & \text{if} \ b \in G(H), \\
\neg b \to c, & \text{otherwise.}
\end{cases}
\]
One may check that the set $H \cup H^{\bullet}/\sim$ endowed with the operations
\begin{align*}
&x \cdot y^{\bullet}=((b\to b')\wedge c \cdot c')^{\bullet},\\
&x^{\bullet} \cdot y^{\bullet}=((b \vee b')\wedge c \cdot c')^{\bullet},\\
&x^{\bullet}\to y^{\bullet}=(b'\to b)\wedge(b \vee(c\to c')),\\
&x^{\bullet}\to y=(b \vee b')\wedge(b \vee(c \to c'))\\
&x \to y^{\bullet}=(b \wedge b'\wedge (b \cdot c \to c'))^{\bullet},
\end{align*}
where $b = b(x)$, $c = c(x)$, $b'=b(x')$ and $c'=c(x')$, is a product algebra. Moreover, the free functor $K\colon \PH\to \PA$, which sends a product hoop $H$ to the product algebra $K(H)=H\cup H^{\bullet}/\sim$, is the left adjoint of the forgetful functor $U\colon \PA \to \PH$.

Finally, the unit $\eta \colon 1_{\PH}\Rightarrow UK$ of the adjunction $K\dashv U$ is cartesian. Indeed, for any product hoop $H$, the morphism $\eta_{H}\colon H \to UK(H) \colon x \mapsto x$ is a kernel of
\begin{align*}
UK(\tau_H)\colon UK(H)&\to U(L_{2})=UK(0)\\
x& \mapsto \begin{cases}
1, \ & \text{if} \ x \in H,\\
0, & \text{otherwise.}
\end{cases}
\end{align*}
Thus, by Remark~\ref{cartesian}, we have that
\begin{equation}\label{eq:situation_PR}
\begin{tikzcd}
{\PA} & {\PH}
\arrow[""{name=0, anchor=center, inner sep=0}, "U"', from=1-1, to=1-2]
\arrow[""{name=1, anchor=center, inner sep=0}, "K"', curve={height=14pt}, from=1-2, to=1-1]
\arrow["\dashv"{anchor=center, rotate=-90}, draw=none, from=1, to=0]
\end{tikzcd}
\end{equation}
is, up to an equivalence, the adjunction associated with the unique map $L_2 \to \{ 1 \}$ in $\PA$.
\end{remark}

We are now ready to show that the converse of Theorem~\ref{thmunital} holds for the ideally exact context \eqref{eq:situation_PR}.

\begin{proposition}\label{conv_prod}
Consider the ideally exact context \eqref{eq:situation_PR} and let 
\[
\xi\colon U(B)\flat X\to X
\]
be a relative $U$-action in $\PH$. If $\xi$ is coherent, then $\xi$ is an ideal action.
\end{proposition}
 
\begin{proof}
Let $\xi\colon U(B)\flat X\to X$ be a coherent action in $\PH$ with associated split epimorphism
\begin{equation*}
\begin{tikzcd}\label{ideal_split_pr}
{A} & {U(B).}
\arrow["{p}", shift left, from=1-1, to=1-2]
\arrow["{s}", shift left, from=1-2, to=1-1]
\end{tikzcd}
\end{equation*}
By Lemma~\ref{unit}, there exists a morphism $f\colon UK(X)\to A$ in $\PH$ such that the following diagram
\[\begin{tikzcd}
X & {UK(X)} & {U(L_2)} \\
X & A & {U(B)}
\arrow["{\eta_{X}}", from=1-1, to=1-2]
\arrow[equal, from=1-1, to=2-1]
\arrow["{UK(\tau_X)}", shift left, from=1-2, to=1-3]
\arrow["f"', dashed, from=1-2, to=2-2]
\arrow["{UK(\iota_X)}", shift left, from=1-3, to=1-2]
\arrow["{U(\iota_B)}", from=1-3, to=2-3]
\arrow["k"', from=2-1, to=2-2]
\arrow["p", shift left, from=2-2, to=2-3]
\arrow["s", shift left, from=2-3, to=2-2]
\end{tikzcd}\]
commutes. As in the case of Wajsberg hoops, it is sufficient to prove that $A$ is a bounded product hoop with bottom element $s(0_B)$, i.e., $s(0_{B})\to a=1_A$ for any $a \in A$.

For any $a \in A$, we have
\[
p(s(0_{B})\to a)=ps(0_{B})\to p(a)=0_{B}\to p(a)=1_B.
\]
Thus, $s(0_{B})\to a \in X$ and
\begin{align*}
f(1^{\bullet}\to (s(0_{B})\to a))&=f(1)=1_A.
\end{align*}
Moreover
\begin{align*}
f(1^{\bullet})\to f(s(0_{B})\to a)&=s(0_{B})\to (s(0_{B})\to a)\\
&=s(0_{B})\cdot s(0_{B})\to a\\
&=s(0_{B})\to a,
\end{align*}
i.e., $s(0_{B})\to a=1_A$. Hence, the internal action $\xi$ is ideal.
\end{proof}

This allows us to state the following characterisation for coherent and ideal actions in the ideally exact context \eqref{eq:situation_PR}.

\begin{theorem}\label{car_PR}
Consider the ideally exact context \eqref{eq:situation_PR} and let 
\[
\xi\colon U(B)\flat X\to X
\]
be a relative $U$-action in $\PH$ with associated split epimorphism
\[
\begin{tikzcd}
{A} & {U(B).}
\arrow["{p}", shift left, from=1-1, to=1-2]
\arrow["{s}", shift left, from=1-2, to=1-1]
\end{tikzcd}
\]
Then $\xi$ is a coherent action if and only if $A$ is a bounded product hoop with bottom element~$s(0_B)$. \noproof
\end{theorem}

Again, as a direct consequence of the previous characterisation, we get the following result, whose proof is analogous to that of Theorem~\ref{BAT_MV}.

\begin{theorem}
The ideally exact context
\[
\begin{tikzcd}
{\PA} & {\PH}
\arrow[""{name=0, anchor=center, inner sep=0}, "U"', from=1-1, to=1-2]
\arrow[""{name=1, anchor=center, inner sep=0}, "K"', curve={height=14pt}, from=1-2, to=1-1]
\arrow["\dashv"{anchor=center, rotate=-90}, draw=none, from=1, to=0]
\end{tikzcd}
\]
is BAT. \noproof
\end{theorem}

We end this section by presenting a class of examples of coherent and ideal actions in the variety of product hoops.

\begin{example}\cite{strongsect, Montagna}
Let $A$ be a product algebra. Let 
\[
\operatorname{B}(A)=\{x \in A \mid \neg \neg x=x\}
\]
be the set of \emph{regular elements} of $A$ and let
\[
\operatorname{D}(A)=\{x \in A \mid \neg \neg x=1_A\}
\]
be the set of \emph{dense elements} of $A$. Then $\operatorname{B}(A)$ is the greatest Boolean subalgebra of $A$, $\operatorname{D}(A)$ is a filter of $U(A)$ and one may consider the split extension
\[
\begin{tikzcd}
\operatorname{D}(A) & U(A) & {U(\operatorname{B}(A))}
\arrow["k", from=1-1, to=1-2]
\arrow["p", shift left, from=1-2, to=1-3]
\arrow["s", shift left, from=1-3, to=1-2]
\end{tikzcd}
\]
where $p(a)= \neg \neg a$ (see~\cite[Theorem~1.2 and Lemma~1.4]{CignoliTorrens1} where it is proved that $p$ is a homomorphism), and $k$ and $s$ are the canonical inclusions. Then, by Theorem~\ref{car_PR} the action associated with the split extension above is coherent since $s(0_A)=0_A$. Notice that this split extension has \emph{strong section} (see~\cite{strongsect_pr, strongsect, Rump1, Rump2}).
\end{example}

\subsection{A non-varietal example}\label{sec_set}

We conclude the manuscript by presenting an example that illustrates how the converse of Theorem~\ref{thmunital} holds true beyond the framework of varieties. To achieve this, we consider the ideally exact category $\Set^{\op}$ (see~\cite[Example 3.7]{IdeallyExact}), which has the singleton $1=\{ * \}$ as initial object, and the empty set $\emptyset$ as terminal one.

In this case, the monadic adjunction with cartesian unit of Remark~\ref{cartesian} may be described by
\begin{equation}\label{eq:situation_set}
\begin{tikzcd}
{\Set^{\op}} & {(\Set_*)^{\op},}
\arrow[""{name=0, anchor=center, inner sep=0}, "U"', from=1-1, to=1-2]
\arrow[""{name=1, anchor=center, inner sep=0}, "F"', curve={height=16pt}, from=1-2, to=1-1]
\arrow["\dashv"{anchor=center, rotate=-90}, draw=none, from=1, to=0]
\end{tikzcd}
\end{equation}
where $(\Set_*)^{\op}$ is the dual of the category of pointed sets, and $U$ maps any set $A$ to the pointed set $(1+A,*)$, where $+$ denotes the disjoint union, and any map $f \colon A \to B$~to
\[
1+f \colon (1+A,*) \to (1+B,*),
\]
which is defined by $(1+f)(*)=*$ and $(1+f)(a)=f(a)$, for any $a \in A$. Furthermore, $F(X,*_X)=X$ for any pointed set $(X,*_X)$. We observe that the unit and the counit of the adjunction
\[
\eta_{(X,*_X)} \colon UF(X,*_X)=(1+X,*) \to (X,*_X),
\]
\[
\varepsilon_{A} \colon A \to FU(A)=1+A
\]
are defined by
\[
\eta_{(X,*_X)}(x)=x, \quad \eta_{(X,*_X)}(1)=*_X, \quad \varepsilon_{A}(a)=a.
\]
We aim to show that the ideally exact context \eqref{eq:situation_set} is BAT.

\begin{remark}
Let us observe that the adjunction we just described is nothing but the dual of the one giving rise to the so-called \emph{maybe monad}, widely used in computer science, see~\cite{maybe_mon}.
\end{remark} 

\begin{remark}
One may check that, as in the previous varietal cases, the functor $U$ of diagram \eqref{eq:situation_set} is full on isomorphisms. In fact, an isomorphism $\alpha \colon (1+A,*) \to (1+B,*)$ in $(\Set_*)^{\op}$ is nothing but a bijection between $1+A$ and $1+B$ such that $\alpha(1)=1$. Hence, the restriction $\beta=\alpha_{\downharpoonright_{A}}$ is a bijection between $A$ and $B$ such that $U(\beta)=\alpha$.
\end{remark}

Now, let $(B,*_B)$ and $(X,*_X)$ be pointed sets. A split extension of $(B,*_B)$ by $(X,*_X)$ in the semi-abelian category $(\Set_*)^{\op}$ may be described as a diagram in $\Set_*$
\[\begin{tikzcd}
{(B,*_B)} & {(A,*_{A})} & {(X,*_{X})}
\arrow["p", shift left, from=1-1, to=1-2]
\arrow["s", shift left, from=1-2, to=1-1]
\arrow["k", from=1-2, to=1-3]
\end{tikzcd}\]
where $p$ is a split monomorphism, $s$ is a split epimorphism, $s \circ p=\id_B$, and there is a canonical isomorphism $(X,*_X) \cong (A/p(B), [*_A])$.

\begin{remark}
As shown in~\cite{Deval}, since $k$ is a normal epimorphism, there exists a unique splitting 
\[
\delta \colon X \to A
\]
defined by 
\[
\delta([a])=
\begin{cases}
a, \quad &\text{if }[a] \neq [*_A],\\
*_A, \quad &\text{if }[a]=[*_A].
\end{cases}
\]  
\end{remark}

We aim to prove now that, as in the previous examples, the converse of Theorem~\ref{thmunital} actually holds also in the ideally exact context \eqref{eq:situation_set}.

\begin{proposition}\label{conv_set}
Consider the ideally exact context \eqref{eq:situation_set} and let
\[
\xi \colon U(B)\flat (X,*_{X}) \to (X,*_{X})
\] be a relative $U$-action in $(\Set_{*})^{\op}$. If $\xi$ is coherent, then $\xi$ is an ideal action.
\end{proposition}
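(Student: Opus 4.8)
The plan is to mirror the strategy of \Cref{conv_MV} and \Cref{conv_prod}, but to translate everything into $\Set_*$, where the bookkeeping becomes transparent. Recall that the zero object of $\bV=(\Set_*)^{\op}$ is the one-point pointed set, so $F(0)$ is a singleton and $UF(0)=(\{1,\ast\},1)$ is a two-element pointed set, playing here the role that $L_2$ plays for MV-algebras. Dualising, the split epimorphism over $U(B)=(1+B,1)$ associated with a coherent $\xi$ is realised by a pointed split monomorphism $(1+B,1)\hookrightarrow A$ together with a pointed retraction, namely the underlying function of the section $s$, a map $A\rightarrow 1+B$ with $s\circ(\text{split mono})=\id$; its kernel is the cokernel $k\colon A\rightarrow X\cong A/p(1+B)$ collapsing the image of the split monomorphism to the basepoint $\ast_A$, so that the elements of $A$ outside that image are exactly the nonbasepoint elements of $X$.

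The first step is to identify, in these terms, what ideality means. Since every pointed set is isomorphic to $U$ of a set — take $A'=A\setminus\{\ast_A\}$ with $\sigma\colon U(A')\rightarrow A$ the canonical isomorphism identifying the adjoined point $1$ with $\ast_A$ — essential surjectivity of $U$ is automatic and imposes nothing. I claim that, in contrast with the varietal cases, the genuine obstruction lies entirely in the section: the split epimorphism is ideal in the sense of \Cref{def_ideal} if and only if the underlying function of $s$ satisfies $s^{-1}(1)=\{\ast_A\}$. Indeed, under this condition the split monomorphism restricts to an injection $B\hookrightarrow A'$ and $s$ restricts to a function $A'\rightarrow B$; these assemble into a split epimorphism $(p',s')$ in $\Set^{\op}$ with $p'\circ s'=\id_B$, and one checks directly that $U(p')=p\circ\sigma$ and $\sigma\circ U(s')=s$, which is exactly the data required by \Cref{def_ideal}. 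For the proposition I only need the sufficiency direction of this equivalence.

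The core of the argument is to show that coherence forces the condition $s^{-1}(1)=\{\ast_A\}$. By \Cref{unit}, coherence provides a morphism $f\colon UF(X)\rightarrow A$ of $\bV$ — equivalently a pointed function $A\rightarrow(1+X,1)$ — making the displayed diagram a morphism of split extensions. Reading the two squares $f\circ\eta_X=k$ and $p\circ f=U(\iota)\circ UF(\tau)$ on underlying pointed sets determines this function completely: the first forces $f$ to fix the elements of $A$ not in the image of the split monomorphism (those representing the nonbasepoint elements of $X$), while the second forces $f$ to send $\ast_A$ to the adjoined point $1$ and every nonbasepoint element of the image to the old basepoint $\ast_X$. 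With $f$ so pinned down, I would extract the conclusion from the remaining square $f\circ UF(\iota)=s\circ U(\iota)$: on underlying pointed sets both composites are functions $A\rightarrow\{1,\ast\}=UF(0)$, the left-hand one taking the value $1$ exactly at $\ast_A$ and the right-hand one taking the value $1$ exactly on $s^{-1}(1)$. Equating them yields $s^{-1}(1)=\{\ast_A\}$.

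Finally I would assemble the lift $(A',p',s',\sigma)$ as in the second step and verify the triangle of \Cref{def_ideal}, concluding that $\xi$ is ideal. The one real subtlety — and the main source of potential error — is the careful handling of the opposite-category conventions when realising each structural map of \eqref{eq:situation_set} as an honest pointed function; once the directions of $\eta_X$, $UF(\tau)$, $UF(\iota)$ and $U(\iota)$ are fixed, every square collapses to an elementary verification on elements. I expect the hard part to be conceptual rather than computational: recognising that, unlike in the algebraic cases where ideality amounts to extra structure on $A$, here ideality is purely a condition on the section, and that this condition is delivered precisely by the section-compatibility square furnished by \Cref{unit}.
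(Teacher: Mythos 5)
Your proposal is correct and takes essentially the same approach as the paper: both obtain $f$ from \Cref{unit}, read the three commutativity squares as pointed functions to conclude $s^{-1}(1)=\lbrace *_A \rbrace$, and then build the ideal lift $A'=A\setminus\lbrace *_A\rbrace$ with the restricted $p$, $s$ and the canonical isomorphism $\sigma$. The only (immaterial) difference is bookkeeping: you pin down $f$ completely from the kernel and retraction squares and then compare along the section square, whereas the paper chases the implication $s(a)=1 \Rightarrow a=*_A$ directly through the same three squares.
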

\begin{proof}

Let $\xi \colon U(B)\flat (X,*_{X}) \to (X,*_{X})$ be a coherent action in $(\Set_{*})^{\op}$ with associated split monomorphism in $\Set_*$ 
\begin{equation}\label{ideal_split_Set}
\begin{tikzcd}
{(1+B,*)} & {(A,*_{A}).}
\arrow["p", shift left, from=1-1, to=1-2]
\arrow["s", shift left, from=1-2, to=1-1]
\end{tikzcd}    
\end{equation}
Then, by Lemma~\ref{unit} there exists a map $f \colon (A,*_{A}) \to (1+X,*)$ in $\Set_{*}$ such that the following diagram
\[
\begin{tikzcd}
{(1+1, *)} & {(1+X, *)} & {(X,*_X)} \\
{(1+B,*)} & {(A,*_A)} & {(X,*_X)}
\arrow["{1+\iota_X}", shift left, from=1-1, to=1-2]
\arrow["{1+\tau_X}", shift left, from=1-2, to=1-1]
\arrow["\eta_{(X,*_X)}", from=1-2, to=1-3]
\arrow["{1+\tau_B}", from=2-1, to=1-1]
\arrow["p", shift left, from=2-1, to=2-2]
\arrow["f", dashed, from=2-2, to=1-2]
\arrow["s", shift left, from=2-2, to=2-1]
\arrow["k", from=2-2, to=2-3]
\arrow[equal, from=2-3, to=1-3]
\end{tikzcd}
\]
commutes, where $(1+1,*)=UF(1,*)$, $1+\iota_X=1+F(\iota_{(X,*_X)})=UF(\iota_{(X,*_X)})$ and $1+\tau_X=1+F(\tau_{(X,*_X)})=UF(\tau_{(X,*_X)})$.

We observe that for any $a \in A$, $s(a)=*$ if and only if $a=p(*)=*_A$. Indeed, if $a=p(*)$, then $s(a)=s(p(*))=*$. Conversely, if $s(a)=*$, it follows by 
\[
*=((1+\tau_B)\circ s)(a)=((1+\tau_X)\circ f)(a)
\]
that $f(a)=*$. Hence
\[
k(a)=(\eta_{(X,*_X)} \circ f)(a)=\eta_{(X,*_X)}(*)=*_{X}.
\]
As a consequence $a \in \ker k$, i.e., there exists $x\in 1+B$ such that $p(x)=a$. If $x \in B$, then 
\[
f(a)=f(p(x))= ((1+\iota_X)\circ (1+\tau_B))(x)=*_{X} \in X,
\]
which is a contradiction since $f(a)=* \notin X$. Thus, $a=p(*)$.

To conclude the proof, we take $A'=A \setminus \{ *_A \}$, $p'=p_{\downharpoonright_{B}}$ and $s'= s_{\downharpoonright_{A'}}$. One may easily check that $U(p')=p$, $U(s')=s$ and the map
\[
\sigma \colon (A,*_A) \to (1+A',*)
\]
defined by
\[
\sigma(a)=
\begin{cases}
a, \quad &\text{if }a \neq *_A\\
*, \quad &\text{if }a= *_A
\end{cases}
\]  
is an isomorphism in $\Set_*$. Thus, the split monomorphism \eqref{ideal_split_Set} is ideal.
\end{proof}

\begin{remark}
Let
\[\begin{tikzcd}
{(1+B, *)} & {(A,p(1)).}
\arrow["p", shift left, from=1-1, to=1-2]
\arrow["s", shift left, from=1-2, to=1-1]
\end{tikzcd}\]
be a split monomorphism in $\Set_{*}$. Since $(A,p(1))$ is isomorphic to $(1 + A',*)$, where $A'=A \setminus \{ p(1) \}$, and $s_{\downharpoonright_{A'}}$ can be defined if and only if $s^{-1}(*)=\{ *_A \}$, we may state the following characterisation of coherent/ideal actions in the category $(\Set_*)^{\op}$.
\end{remark}

\begin{theorem}\label{char_set}
Consider the ideally exact context \eqref{eq:situation_set} and let
\[
\xi \colon U(B)\flat (X,*_{X}) \to (X,*_{X})
\] be a relative $U$-action in $(\Set_{*})^{\op}$ with associated split monomorphism in $\Set_{*}$
\[\begin{tikzcd}
{(1+B,*)} & {(A,*_{A}).}
\arrow["p", shift left, from=1-1, to=1-2]
\arrow["s", shift left, from=1-2, to=1-1]
\end{tikzcd}\]
Then $\xi$ is a coherent action if and only if $s^{-1}(*)= \{ *_A \}$. \noproof
\end{theorem}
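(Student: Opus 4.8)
The plan is to read \Cref{char_set} as the explicit, element-level form of the equivalence ``coherent $\iff$ ideal'' that has already been established for this context, and to extract the condition $s^{-1}(1)=\lbrace *_A \rbrace$ directly from the proof of \Cref{conv_set}. Since \Cref{thmunital} gives ``ideal $\Rightarrow$ coherent'' in every ideally exact context, and \Cref{conv_set} gives the reverse implication here, the whole task reduces to identifying being ideal with the set-theoretic condition on $s$. Throughout I will use that $p\colon (1+B,1)\to(A,*_A)$ is a morphism of pointed sets, so $p(1)=*_A$, and hence $\lbrace *_A \rbrace = \lbrace p(1) \rbrace$.

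First I would treat the forward implication. Assuming $\xi$ coherent, \Cref{unit} supplies a map $f\colon (A,*_A)\to(1+X,1)$ in $\Set_*$ fitting into the morphism of split extensions drawn in \Cref{conv_set}. The key computation there shows $s(a)=1$ if and only if $a=p(1)=*_A$: one inclusion is the splitting identity $s(p(1))=1$, while for the other, $s(a)=1$ forces $f(a)=1$ by commutativity of the left-hand square, so $a\in\ker k$, whence $a=p(x)$ for some $x\in 1+B$; and $x\in B$ would give $f(a)=*_X\in X$, contradicting $f(a)=1$. This yields exactly $s^{-1}(1)=\lbrace *_A \rbrace$, so no new work beyond transcribing that paragraph is needed.

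For the converse I would assume $s^{-1}(1)=\lbrace *_A \rbrace$ and reconstruct the ideal datum of \Cref{def_ideal}. Setting $A'=A\setminus\lbrace *_A \rbrace$, the hypothesis guarantees that $s_{\downharpoonright_{A'}}$ takes values in $B$, so that $s'\coloneqq s_{\downharpoonright_{A'}}$ and $p'\coloneqq p_{\downharpoonright_{B}}$ assemble into a split epimorphism over $B$ in $\bU=\Set^{\op}$ (injectivity of $p$ and $p(1)=*_A$ put $p(B)\subseteq A'$). The map $\sigma\colon(A,*_A)\to(1+A',1)$ of \Cref{conv_set} is an isomorphism in $\Set_*$ realising $U(p')=p$ and $U(s')=s$ up to $\sigma$, so the split monomorphism associated with $\xi$ is ideal; \Cref{thmunital} then gives coherence, closing the equivalence.

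The only genuinely delicate point will be the duality bookkeeping: one must consistently translate split epimorphisms in $\bV=(\Set_*)^{\op}$ into split monomorphisms in $\Set_*$, and check that the restricted maps $p'$ and $s'$ really form a point over $B$ in $\bU=\Set^{\op}$ whose image under $U$ recovers the original data up to $\sigma$. Both verifications are already carried out inside \Cref{conv_set}, so I expect no obstacle beyond careful transcription; the theorem is essentially a repackaging of that proposition together with \Cref{thmunital}.
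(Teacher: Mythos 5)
Your proposal is correct and takes essentially the same route the paper intends: the theorem is stated with its proof omitted precisely because it is the combination of \Cref{thmunital}, \Cref{conv_set}, and the preceding remark observing that $s_{\downharpoonright_{A'}}$ is well-defined as a map into $B$ if and only if $s^{-1}(1)=\lbrace *_A \rbrace$. Your forward direction (extracting $s^{-1}(1)=\lbrace *_A\rbrace$ from the key computation inside the proof of \Cref{conv_set}) and your converse (building $A'=A\setminus\lbrace *_A\rbrace$, $p'$, $s'$, $\sigma$ to exhibit ideality and then invoking \Cref{thmunital}) match the paper's argument exactly.
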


The next example shows that the notion of coherent/ideal action does not trivialise in~$(\Set_*)^{\op}$.

\begin{example}
Consider the pointed sets $(A,*)$ and $(B, *)$, where $A=\{ a, * \}$ and $B=\{ * \}$. Let $p \colon B \hookrightarrow A$ be the inclusion and consider its retraction $s \colon A \to B$ defined by $s(a)=s(*)=*$. The associated action is not coherent since there exists no function $s' \colon \{ * \} \to \emptyset$.
\end{example}

To conclude the manuscript, we aim to prove that the ideally exact context \eqref{eq:situation_set} has a good theory of actions.

\begin{theorem}
The ideally exact context
\[
\begin{tikzcd}
{\Set^{\op}} & {(\Set_*)^{\op},}
\arrow[""{name=0, anchor=center, inner sep=0}, "U"', from=1-1, to=1-2]
\arrow[""{name=1, anchor=center, inner sep=0}, "F"', curve={height=15pt}, from=1-2, to=1-1]
\arrow["\dashv"{anchor=center, rotate=-90}, draw=none, from=1, to=0]
\end{tikzcd}
\]
is BAT.
\end{theorem}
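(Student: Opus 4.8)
The plan is to follow the same two-step pattern used to establish the varietal BAT theorems (compare \Cref{BAT_MV} and its algebraic analogue). By definition, the context is BAT precisely when every coherent action is ideal and every morphism of coherent actions is ideal. The first half is already settled by \Cref{conv_set}, so the entire task reduces to the statement about morphisms, with \Cref{char_set} providing the handle we need.

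Concretely, I would start from a morphism of coherent $B$-actions in $\bV=(\Set_*)^{\op}$, say $h\colon A_1\to A_2$ over $U(B)$ with projections $p_i$ and sections $s_i$ satisfying $p_2\circ h=p_1$ and $h\circ s_1=s_2$. Dualising into $\Set_*$ turns this into a pointed map $\tilde h\colon A_2\to A_1$ together with split monomorphisms $\tilde p_i\colon (1+B,1)\to A_i$, their retractions $\tilde s_i\colon A_i\to(1+B,1)$, and the identities $\tilde h\circ\tilde p_2=\tilde p_1$ and $\tilde s_1\circ\tilde h=\tilde s_2$. By \Cref{char_set}, coherence of each of the two actions is exactly the condition $\tilde s_i^{-1}(1)=\{*_{A_i}\}$, where $*_{A_i}=\tilde p_i(1)$.

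The crux of the argument is to show that $\tilde h$ reflects base points, i.e.\ that $\tilde h^{-1}(*_{A_1})=\{*_{A_2}\}$. This is the $\Set^{\op}$-analogue of the computations $h(1_{A_1})=1_{A_2}$ and $h(0_{A_1})=0_{A_2}$ that drive the proofs in the algebraic and MV cases. Given $a\in A_2$ with $\tilde h(a)=*_{A_1}=\tilde p_1(1)$, applying $\tilde s_1$ and using $\tilde s_1\circ\tilde p_1=\id$ together with $\tilde s_1\circ\tilde h=\tilde s_2$ yields $\tilde s_2(a)=1$; coherence of the second action then forces $a=*_{A_2}$. Consequently $\tilde h$ restricts to a function $\tilde h'\colon A_2\setminus\{*_{A_2}\}\to A_1\setminus\{*_{A_1}\}$.

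Setting $A_i'=A_i\setminus\{*_{A_i}\}$ as in the proof of \Cref{conv_set}, this restriction is precisely a morphism $h'\colon A_1'\to A_2'$ in $\bU=\Set^{\op}$. It remains to verify, by restricting the above identities along the inclusions $A_i'\hookrightarrow A_i$, that $h'$ commutes with the split epimorphisms $p_i',s_i'$ over $B$ and that $U(h')=h$ under the identification $A_i\cong 1+A_i'$; both checks are routine. This exhibits $h$ as an ideal morphism and, combined with \Cref{conv_set}, completes the proof. The only genuine obstacle is keeping the opposite-category directions straight: once the data is transported correctly into $\Set_*$, the base-point-reflection step is immediate, and it is exactly this step that plays the role occupied by unit- or bottom-preservation in the varietal arguments.
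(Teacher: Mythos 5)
Your proposal is correct and follows essentially the same route as the paper's proof: the object half is delegated to \Cref{conv_set}, and for morphisms the key step is exactly the paper's computation that $h(a)=*_{A_1}$ forces $s_2(a)=s_1(h(a))=s_1(*_{A_1})=1$, whence $a=*_{A_2}$ by \Cref{char_set}, so that $h$ restricts to the complements of the base points and gives the required morphism $h'$ with $U(h')=h$. The only cosmetic difference is that the paper works directly with the data in $\Set_*$ rather than explicitly dualising, and it spells out the (routine) fact that $p_i^{-1}(*_{A_i})=\{1\}$ so that the restricted sections and retractions are well defined.
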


\begin{proof}
By Proposition~\ref{conv_set}, we know that every coherent relative $U$-action is ideal. It remains to show that every morphism between ideal split monomorphisms in $\Set_*$ is ideal.

Let $B$ be a set and consider a morphism
\[
\begin{tikzcd}
{(A_2,*_2)} & {} & {(A_1,*_1)} \\
& {(1+B,*)}
\arrow["h", from=1-1, to=1-3]
\arrow["{s_2}"', shift right, from=1-1, to=2-2]
\arrow["s_1"', from=1-3, to=2-2]
\arrow["{p_2}"'{pos=0.3}, shift right, from=2-2, to=1-1]
\arrow["p_1"', shift right=2, from=2-2, to=1-3]
\end{tikzcd}
\]
between ideal split monomorphisms over $(1+B,*)$.

It follows from Theorem~\ref{char_set} that $s_i^{-1}(*)=*_i$, for $i=1,2$. In addition, if $p_i(b)=*_i$, then $b=s_i(p_i(b)))=s_i(*_i)=*$. This means that $p_i^{-1}(*_i)=*$, for any~$i=1,2$. 

Thus, if $A_i'=A_i \setminus \{ *_i \}$, we have that $p_i'=p_i{_{\downharpoonright_{B}}}$ and $s_i'=s_i{_{\downharpoonright_{A_i'}}}$ define two split monomorphisms in $\Set$
\[
\begin{tikzcd}
{B} & {A_i',}
\arrow["p_i'", shift left, from=1-1, to=1-2]
\arrow["s_i'", shift left, from=1-2, to=1-1]
\end{tikzcd} \quad i=1,2
\]
and it is immediate to check that 
\[
\begin{tikzcd}
{A_2} & {} & {A_1} \\
& {B}
\arrow["h'=h_{\downharpoonright_{A_i'}}", from=1-1, to=1-3]
\arrow["{s_2'}"', shift right, from=1-1, to=2-2]
\arrow["s_1'"', from=1-3, to=2-2]
\arrow["{p_2'}"'{pos=0.3}, shift right, from=2-2, to=1-1]
\arrow["p_1'"', shift right=2, from=2-2, to=1-3]
\end{tikzcd}
\]
is a morphism of split monomorphisms over $B$ such that $U(h')=h$. In fact, one has that $h(a)=*_1$ if and only if $s_2(a)=s_1(h(a))=s_1(*_1)=*$. Hence, by Theorem~\ref{char_set} we get that $a=*_2$, i.e., $h^{-1}(*_1)=*_2$.
\end{proof}

\section*{Acknowledgments}\label{thanks}
This work is supported by the University of Milan, University of Messina, University of Palermo and by the ``National Group for Algebraic and Geometric Structures and their Applications'' (GNSAGA -- INdAM). The first author is supported by the SDF Sustainability Decision Framework Research Project -- MISE decree of 31/12/2021 (MIMIT Dipartimento per le politiche per le imprese -- Direzione generale per gli incentivi alle imprese) -- CUP:~B79J23000530005, COR:~14019279, Lead Partner:~TD Group Italia Srl, Partner:~University of Palermo. He is also a Postdoctoral Researcher of the Fonds de la Recherche Scientifique--FNRS. The second and third authors were supported by the National Recovery and Resilience Plan (NRRP), Mission 4, Component 2, Investment 1.1, Call for tender No.~1409 published on 14/09/2022 by the Italian Ministry of University and Research (MUR), funded by the European Union -- NextGenerationEU -- Project Title Quantum Models for Logic, Computation and Natural Processes (QM4NP) -- CUP:~B53D23030160001 -- Grant Assignment Decree No. 1371 adopted on 01/09/2023 by the Italian Ministry of University and Research (MUR).

\end{document}